\begin{document}

\def\colim{\mathop{\rm colim}}
\def\ker{\mathop{\rm ker}\nolimits}
\def\precdot{\mathop{\prec\!\!\!\cdot}\nolimits}
\def\hom{\mathop{\rm Hom}\nolimits}
\def\Aut{\mathop{\rm Aut}}

\def\presup#1{{}^{#1}\hspace{-.11em}}      
\def\presub#1{{}_{#1}\hspace{-.08em}}      

\def\propaorder{P1}   
\def\propbnn{P2}      
\def\propb2e{P3}      
\def\propxxa{\ref{remark-xxa}}      
\def\propcommutes{P4} 
\def\propautker{P5}   
\def\prophomzp{P6}    
\def\propsplits{\ref{remark-splits}}   
\def\propna{P7}       
\def\propbnb{P8}     

\newtheorem{thm}{Theorem}[section]
\newtheorem{theorem}[thm]{Theorem}
\newtheorem{lemma}[thm]{Lemma}
\newtheorem{corollary}[thm]{Corollary}
\newtheorem{proposition}[thm]{Proposition}
\newtheorem{example}[thm]{Example}

\theoremstyle{remark}
\newtheorem{rem}[thm]{Remark}

\makeatletter
\let\c@equation\c@thm
\makeatother
\numberwithin{equation}{section}

\newcommand{\comment}[1]{}
\newcommand{\case}[1]{{\bf \par \noindent Case #1:}}
\newcommand{\mylabel}[1]{\label{#1}}



\title{Large localizations of finite groups}
\author{Adam J. Prze\'zdziecki $^1$}
\address{Institute of Mathematics, Warsaw University, Warsaw, Poland}
\address{Warsaw University of Life Sciences - SGGW, Warsaw, Poland}
\email{adamp@mimuw.edu.pl}

\maketitle

\comment{
\begin{center}
\vspace{-0.7cm}
{\small \it Institute of Mathematics, Warsaw University, Warsaw, Poland \\
Email address: \verb+adamp@mimuw.edu.pl+ } \\
\end{center}
}

\footnotetext[1]{The author was partially supported by grant
  1 P03A 005 26}

\begin{abstract}
  We construct examples of localizations
  in the category of groups which take the Mathieu group $M_{11}$ to
  groups of arbitrarily large cardinality which are ``abelian
  up to finitely many generators''. The paper is part of
  a broader study on the group theoretic properties which are or are not
  preserved by localizations.

  \vspace{4pt}
  {\noindent\leavevmode\hbox {\it MSC:\ }}
  {\bf 20J15}(20D99)
  \vspace{-9pt}
\end{abstract}


\section{Introduction}
\mylabel{section-introduction}

Let $f:H\to G$ be a group homomorphism. We say (cf.
\cite{aschbacher}) that $f$ is {\em closed} if it induces via
composition a bijection of sets
$$\hom(G,G)\to\hom(H,G)$$
This paper is part of a broader study of the question regarding
which group theoretic properties do and which do not pass from $H$
to $G$ via closed homomorphisms. It is more natural to formulate
this in the language of localizations, as explained in the next
paragraph.

The interest in closed homomorphisms is motivated by the following
fact proved in \cite[Lemma 2.1]{casacuberta-survey}. There exists a
closed homomorphism $f:H\to G$ if and only if there exists a
localization $L:Groups\to Groups$ such that $LH=G$. A {\em
localization} (also called a reflection) is a functor
$L:\mathcal{C}\to\mathcal{C}$ equipped with a natural transformation
$\eta:Id\to L$ such that the compositions $L(\eta_X)$ and
$\eta_{L(X)}$ are equal and are isomorphisms, for each object $X$.
Thus the question we study is: which group theoretic properties are
and which are not preserved by localizations. For a survey of this
problem see \cite{casacuberta-survey}.

In Section \ref{section-examples} we prove that for the Mathieu
group $M_{11}$ and any cardinal $\kappa$ there exist closed
inclusions $M_{11}\to G$ such that the cardinality of $G$ is at
least $\kappa$, $G$ is generated by $M_{11}$ and an abelian
subgroup, and the abelianization of $G$ is of the same cardinality
as $G$, in particular $G$ is very far from being simple. This
example is obtained as a corollary of Theorem \ref{theorem-main}
which states conditions on a finite group $S$ that imply the
existence of such closed embeddings $S\to G$. We believe it should
be possible to construct a solvable group $S$ which meets these
conditions.

First examples of closed embeddings of a finite group into an
infinite one were described by Libman \cite{libman}. Closed
embeddings of finite simple groups into arbitrarily large simple
groups were constructed by G\"{o}bel, Rodr\'iguez and Shelah in
\cite{grs} and \cite{gs}. Nonsimple localizations of finite simple
groups were described by Rodr\'iguez, Scherer and Viruel in
\cite{rsv-simple}. Nonperfect localizations of infinite perfect
groups were obtained independently by Badzioch and Feshbach in
\cite{badzioch} and by Rodr\'iguez, Scherer and Viruel in
\cite{rsv-perfect}.

Closer to the ``abelian end'', Dwyer and Farjoun have asked
whether a closed homomorphism $f:H\to G$ with $H$ finite nilpotent
must be an epimorphism. They proved that this is the case when $H$
is of nilpotency class at most two (see \cite[Theorem
3.3]{libman-nilpotent} and \cite[Theorem
2.3]{casacuberta-survey}). Aschbacher \cite{aschbacher} extended
this result to the case where $H$ is of class at most three, under
the additional assumption that $G$ is finite.

The author wishes to express his thanks to the referee for many
suggestions, especially for removing a computer aided verification
from the proof of Example \ref{example-mathieu} and two Properties
from the list in Section \ref{section-construction}

\section{Bass-Serre theory}

In this section we collect elements of the Bass-Serre theory on
group amalgams acting on trees. Most of the results below can be
found in \cite{serre}, some in \cite{magnus} and \cite{brown},
others are simple corollaries. We follow the terminology of
\cite{serre}.

The neutral element of a group is always denoted by $e$. If $k$ is
an element and $x$ is an element or a subset of a group we often
write $\presup{k}x$ instead of $kxk^{-1}$.

If $H$ is a common subgroup of $G_1$ and $G_2$ then the {\em
amalgam} $G_1*_HG_2$ is the colimit of the diagram $G_1\supseteq
H\subseteq G_2$, that is, the unique, up to isomorphism, group $G$
which contains $G_1$ and $G_2$ and such that $G_1\cap G_2=H$ and
for any pair of homomorphisms $f_1:G_1\to K$ and $f_2:G_2\to K$ if
$f_1|_H=f_2|_H$ then there exists a unique homomorphism $f:G\to K$
extending both $f_1$ and $f_2$. It is customary to call $G_1$ and
$G_2$ the {\em factors} of the amalgam $G_1*_HG_2$.

\rem \mylabel{remark-amalgam-generation}
  The amalgam $G$ is generated by $G_1\cup G_2$.

\begin{lemma}\cite[\S 4.1 Theorem 7]{serre}
\mylabel{lemma-tree}
  Let $G=G_1*_HG_2$ be an amalgam of groups.
  There exists a unique, up to isomorphism, tree $T_G$ on which
  $G$ acts with fundamental domain a segment:
\begin{center}
\def\piccent{20}
\begin{picture}(96,28)
  \put(23,\piccent){\circle{4}}
  \put(73,\piccent){\circle{4}}
  \put(25,\piccent){\line(1,0){46}}
  \put(23,\piccent){\put(0,-16){\makebox(0,0)[b]{$G_1$}}}
  \put(23,\piccent){\put(25,-12){\makebox(0,0)[b]{$H$}}}
  \put(23,\piccent){\put(50,-16){\makebox(0,0)[b]{$G_2$}}}
\end{picture}
\end{center}
The labels denote the stabilizers of the edge and its vertices. No
element of $G$ may swap the ends of an edge of $T_G$.
\end{lemma}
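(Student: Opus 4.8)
Following \cite{serre}, the plan is to construct the tree explicitly and then read off all of its properties. I would take as vertex set $V=(G/G_1)\sqcup(G/G_2)$ and as edge set $E=G/H$, let $G$ act on both by left translation, and declare that the edge $gH$ joins the vertices $gG_1$ and $gG_2$; this incidence is well defined because $H\subseteq G_1\cap G_2$. Call the resulting graph $T_G$. Directly from the construction, the segment with vertices $G_1,G_2$ and edge $H$ meets every $G$-orbit of vertices and of edges exactly once, so it is a fundamental domain; the stabilizer of $gG_i$ is the conjugate $\presup{g}G_i$ and the stabilizer of $gH$ is $\presup{g}H$, so the base segment carries precisely the labels shown. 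Finally $G/G_1$ and $G/G_2$ are disjoint $G$-invariant subsets of $V$, so no element of $G$ can interchange the two endpoints of an edge.

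It remains to show that $T_G$ is a tree and that it is unique. For connectedness I would use Remark \ref{remark-amalgam-generation}: writing $g\in G$ as a word $s_1s_2\cdots s_m$ in elements of $G_1\cup G_2$, the partial products supply intermediate vertices joining the base vertex to $gG_1$ (and likewise to $gG_2$) by an edge path, so $T_G$ is connected.

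The crux is to show $T_G$ has no circuit, which is exactly the normal form theorem for amalgamated products. I would argue by contradiction: translating a hypothetical circuit so that it starts at the vertex $G_1$, one reads off an alternating reduced word in coset representatives of $H$ in $G_1$ and in $G_2$, and the condition that the path closes up forces this reduced word to be trivial in $G$, contradicting the normal form theorem. That theorem I would in turn prove by van der Waerden's permutation trick: let $\Omega$ be the set of reduced sequences, build from the inclusions $G_1,G_2\hookrightarrow\mathrm{Sym}(\Omega)$ a homomorphism $G\to\mathrm{Sym}(\Omega)$, and observe that a nonempty reduced word does not fix the empty sequence. (Alternatively one may follow \cite{serre} and induct on the length of a shortest circuit.) This is the one genuinely substantial step.

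For uniqueness, let $T'$ be any tree with a $G$-action whose fundamental domain is a segment $\sigma'$ carrying the stabilizers $G_1,G_2,H$; after applying a suitable element of $G$, the two vertices $\bar v_1,\bar v_2$ of $\sigma'$ have stabilizers $G_1,G_2$ and its edge $\bar e$ has stabilizer $H$. Since $\sigma'$ is a fundamental domain, $V(T')=G\bar v_1\sqcup G\bar v_2$ and $E(T')=G\bar e$, and by orbit--stabilizer the maps $gG_i\mapsto g\bar v_i$ and $gH\mapsto g\bar e$ are bijections. As the edge $g\bar e$ joins $g\bar v_1$ and $g\bar v_2$, exactly as $gH$ joins $gG_1$ and $gG_2$ in $T_G$, these bijections assemble into a $G$-equivariant isomorphism of graphs $T_G\to T'$. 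Hence the tree is unique up to isomorphism.
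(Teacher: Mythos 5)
Your construction (vertices $G/G_1\sqcup G/G_2$, edges $G/H$, connectedness from generation by $G_1\cup G_2$, absence of circuits from the normal form theorem, and the orbit--stabilizer argument for uniqueness) is correct and is exactly the argument of \cite[\S 4.1 Theorem 7]{serre}, which the paper cites without reproducing a proof. So your proposal matches the intended proof; the only sketched step, that a non-backtracking closed path would yield a nontrivial alternating reduced word representing $e$, is the standard reduction to the normal form theorem and is fine.
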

In this paper, as in \cite{serre}, the group $G$ acts on $T_G$ from
the left. Below we list some immediate consequences of
\ref{lemma-tree}.

\rem \mylabel{remark-tree-consequences}
\begin{itemize}

  \item[(i)] The stabilizers of the vertices of $T_G$ are precisely the
    conjugates of either $G_1$ or $G_2$.
  \item[(ii)] The vertex stabilizer acts transitively on the
    neighbors of that vertex (since $G$ acts transitively on the
    edges of $T_G$).
  \item[(iii)] The assignment of a stabilizer to a vertex of $T_G$
    is a one-to-one correspondence, which
    allows us to make no distinction between vertices and their
    stabilizers. This means that we identify the vertex set of
    $T_G$ with $\{\presup{g}G_i \mid i=1,2,g\in G\}$ and the edge
    set of $T_G$ with
    $\{\{\presup{g}G_1,\presup{g}G_2\}\mid g\in G\}$.
    Then $G$ acts (on the left) on $T_G$ via (left) conjugation.
    In particular, an $x$ in $G$ fixes a vertex $V$ if and
    only if $x\in V$.
  \item[(iv)] If the graph
\begin{center}
\def\piccent{20}
\begin{picture}(146,28)
  \put(23,\piccent){\circle{4}}
  \put(73,\piccent){\circle{4}}
  \put(123,\piccent){\circle{4}}
  \put(25,\piccent){\line(1,0){46}}
  \put(75,\piccent){\line(1,0){46}}
  \put(23,\piccent){\put(0,-16){\makebox(0,0)[b]{$G_1$}}}
  \put(23,\piccent){\put(25,-12){\makebox(0,0)[b]{$H$}}}
  \put(23,\piccent){\put(50,-16){\makebox(0,0)[b]{$G_2$}}}
  \put(23,\piccent){\put(75,-12){\makebox(0,0)[b]{$H'$}}}
  \put(23,\piccent){\put(100,-16){\makebox(0,0)[b]{$G_2'$}}}
\end{picture}
\end{center}
is a fragment of $T_G$ then, as in (ii), there exists an element
$g\in G_2$ such that $G_1{\,\!\!\!}'=\presup{g}G_1$. Moreover for
the same $g$ we have $H'=\presup{g}H$ and $G_1\cap G_1'=H\cap H'$.
\end{itemize}

The next lemma defines the notion of a {\em reduced decomposition}
of an element $g\in G_1*_HG_2$. We choose sets $R(G_1)$ and
$R(G_2)$ of right coset representatives of $H\backslash G_1$ and
$H\backslash G_2$ respectively such that $e$ belongs to $R(G_1)$
and $R(G_2)$.

\begin{lemma}
\mylabel{lemma-reduced-decomposition}
  For every $g\in G_1*_HG_2$ there exists a unique decomposition
  (called a reduced decomposition)
  $$ g=ar_1r_2\ldots r_n $$
  such that $a\in H$ and $r_i\in R(G_1)\cup R(G_2)\smallsetminus\{e\}$ for
  $i=1,2,\ldots,n$ and for any $i=1,2,\ldots,n-1$ one of $r_i$ and
  $r_{i+1}$ belongs to $R(G_1)$ and the other to $R(G_2)$.
\end{lemma}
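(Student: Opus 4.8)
The plan is to obtain existence by a routine rewriting induction and uniqueness from the action of $G$ on the tree $T_G$ of Lemma~\ref{lemma-tree}; throughout, for $x\in G$ and a vertex or edge $V$ of $T_G$ I write $x\cdot V$ for the image of $V$ under $x$, that is, conjugation, as in Remark~\ref{remark-tree-consequences}(iii), and $d$ denotes graph distance in $T_G$. For existence: by Remark~\ref{remark-amalgam-generation}, $g$ is a product of finitely many elements of $G_1\cup G_2$, and I would induct on the number of factors. Given a reduced decomposition $r_1\cdots r_n$ (with its leading $H$-part) of the product of all but the first factor $x$, multiply $x$ into the leftmost syllable --- an element of some $G_i$ --- and rewrite that element in the form $ar$ with $a\in H$, $r\in R(G_i)$, using $G_i=\coprod_{r\in R(G_i)}Hr$. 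The resulting word is reduced unless $r$ and the next syllable lie in the same $R(G_i)$, in which case a single cancellation $rr_1=a'r'$ ($a'\in H$, $r'\in R(G_i)$) occurs; iterating absorbs everything into reduced form. This is routine.

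\emph{Uniqueness, geometric step.} The heart is to recover from $g$ the length $n$ of a reduced decomposition together with the factors $G_{j_1},G_{j_n}$ of its first and last syllables. So let $g=ar_1\cdots r_n$ be reduced with $n\ge 1$, let $j_i\in\{1,2\}$ be the index with $r_i\in R(G_{j_i})$, let $j_{n+1}$ be the index $\ne j_n$, put $Q_0=G_{j_1}$ and $Q_i=(r_1\cdots r_i)\cdot G_{j_{i+1}}$ for $1\le i\le n$, and let $G'$ be the base vertex $\ne G_{j_1}$. Since $r_i$ fixes $G_{j_i}$ one has $Q_{i-1}=(r_1\cdots r_i)\cdot G_{j_i}$, so $\{Q_{i-1},Q_i\}=(r_1\cdots r_i)\cdot\{G_1,G_2\}$ is an edge of $T_G$; and $Q_{i-1}=Q_{i+1}$ would force $r_{i+1}$ to fix $G_{j_i}$, i.e.\ $r_{i+1}\in G_{j_i}\cap G_{j_{i+1}}=H$, contradicting $r_{i+1}\in R(G_{j_{i+1}})\smallsetminus\{e\}$. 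Hence $Q_0,\ldots,Q_n$ is a reduced edge path, so it is the geodesic joining its endpoints and $d(Q_0,Q_n)=n$. Because $a\in H$ fixes $Q_0=G_{j_1}$, applying the isometry $a$ gives $d(G_{j_1},g\cdot G_{j_{n+1}})=n$ and $d(G_{j_1},g\cdot G_{j_n})=n-1$. Moreover $ar_1\in G_{j_1}$ is not in $H$, so it does not fix $G'$ (as $G_{j_1}\cap G'=H$); since the second vertex of the geodesic from $G_{j_1}$ to $g\cdot G_{j_{n+1}}$ is $(ar_1)\cdot G'\ne G'$, that geodesic never passes through $G'$, whence $d(G',g\cdot G_{j_{n+1}})=n+1$. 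Tabulating the four numbers $d(u,g\cdot v)$ with $u,v\in\{G_1,G_2\}$ one gets $n-1,n,n,n+1$, so the maximum $n+1$ is attained at a single pair, from which $n$, $j_1$ and $j_n$ are read off. If instead $n=0$, so $g\in H$, then every $d(u,g\cdot v)\le 1$, so a competing reduced decomposition of $g$ must also have length $0$.

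\emph{Uniqueness, induction on length.} Given two reduced decompositions $ar_1\cdots r_n=bs_1\cdots s_m$ of $g$, the geometric step gives $n=m$ and forces $r_1,s_1$ into the same $R(G_i)$, so $ar_1,bs_1\in G_i$. Multiplying the identity on the left by $(bs_1)^{-1}\in G_i$ and writing the resulting element of $G_i$ as $ct$ with $c\in H$, $t\in R(G_i)$, one obtains two reduced decompositions of a common element, of lengths $n-1$ and, if $t\ne e$, also $n$; since the length is an invariant of the element by the geometric step, $t=e$. The inductive hypothesis then identifies the remaining syllables, and $ar_1=bs_1$ forces $r_1=s_1$ (the designated representative of a common $H$-coset) and hence $a=b$. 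The base case $n=0$ is immediate from the geometric step.

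The step I expect to cause trouble is the geometric one: that $Q_0,\ldots,Q_n$ is reduced is easy, but computing the four distances $d(u,g\cdot v)$ correctly requires care, because the leading factor $a\in H$ fixes $Q_0$ but in general moves the later $Q_i$, so its interaction with the geodesic must be tracked; everything else is bookkeeping. A tree-free alternative is van der Waerden's device: define compatible left actions of $G_1$ and of $G_2$ on the set $\Omega$ of reduced sequences, extend them to an action of $G$ via the universal property of the amalgam, and observe that $g\mapsto g\cdot\omega_0$, where $\omega_0$ is the reduced sequence with $a=e$ and $n=0$, is a left inverse of the evaluation map $\Omega\to G$, $(a;r_1,\ldots,r_n)\mapsto ar_1\cdots r_n$; this forces injectivity, which is exactly uniqueness.
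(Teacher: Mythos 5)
Your argument is correct, but it takes a genuinely different route from the paper: the paper does not prove this lemma at all, it simply cites Serre's normal form theorem (\emph{Trees}, \S 1.2, Theorem 1), whose standard proof is exactly the van der Waerden device you sketch at the end --- letting $G_1$ and $G_2$ act compatibly on the set of reduced sequences and invoking the universal property of the amalgam. What you do instead is deduce uniqueness from the Bass--Serre tree of Lemma \ref{lemma-tree}: your path $Q_0,\ldots,Q_n$ is reduced for the reason you give (backtracking would force $r_{i+1}\in G_{j_i}\cap G_{j_{i+1}}=H$, impossible for a nontrivial coset representative), the four distances $d(u,g\cdot v)$, $u,v\in\{G_1,G_2\}$, do come out as $n-1,n,n,n+1$ with the maximum attained at a unique pair, and the induction that peels off the first syllable is sound; this buys a clean geometric meaning of the length, very much in the spirit of the paper's own Lemma \ref{lemma-g-geodesic}. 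The one caveat worth stating is logical order rather than correctness: your proof is valid \emph{relative to} Lemma \ref{lemma-tree} and Remark \ref{remark-tree-consequences}(iii), which the paper quotes as black boxes, but in Serre's development the tree theorem (\S 4.1, Theorem 7) is itself proved using the reduced-word theorem, so the tree-based derivation could not serve as a from-scratch foundation without circularity; the van der Waerden alternative you mention (equivalently, the paper's citation) is the self-contained route, while your geometric argument is best viewed as an instructive rederivation inside the already-established Bass--Serre framework.
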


\begin{proof}
  See \cite[\S 1.2, Theorem 1]{serre}.
\end{proof}

The integer $n$ above is called the {\em length} of $g$ and is
denoted $l(g)$. It is easy to see that $g$ decomposes as in Lemma
\ref{lemma-reduced-decomposition} if and only if $g=g_0g_1\dots
g_n$, where $g_0\in H$ and $g_i$, for $i=1,2,\dots n$, alternately
belongs to $G_1\smallsetminus H$ and $G_2\smallsetminus H$.
Therefore $l(g)$ does not depend on the choice of the right coset
representatives and we have $l(g^{-1})=l(g)$. An element
$g=ar_1r_2\ldots r_n$ is called {\em cyclically reduced} if
$l(g)\geq 2$ and one of $r_1$, $r_n$ belongs to $G_1\smallsetminus
H$ and the other to $G_2\smallsetminus H$. We note that $g\in G$
is cyclically reduced if and only if $l(g)\geq 2$ is even.

\begin{lemma}
\mylabel{lemma-c-reduced}
  Every element of $G_1*_HG_2$ is conjugate to a cyclically reduced one or
  to an element of $G_1\cup G_2$. Every cyclically reduced element is
  of infinite order.
\end{lemma}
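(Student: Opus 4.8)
\medskip
\noindent
The plan is to argue entirely with the normal form of Lemma~\ref{lemma-reduced-decomposition}, using its equivalent description of a reduced decomposition as an alternating product $g=g_0g_1\cdots g_n$ with $g_0\in H$ and the syllables $g_i$ $(i\ge 1)$ lying alternately in $G_1\smallsetminus H$ and $G_2\smallsetminus H$; I will also use, as recalled just above, that $g$ is cyclically reduced precisely when $l(g)\ge 2$ is even.

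For the first assertion I would induct on $l(g)$. If $l(g)\le 1$ then $g\in G_1\cup G_2$, and if $l(g)\ge 2$ is even then $g$ is already cyclically reduced, so the only case left is $l(g)=n$ with $n$ odd and $n\ge 3$. Writing $g=g_0g_1\cdots g_n$ in reduced form, the syllables $g_1$ and $g_n$ lie in the same factor because $n$ is odd; say both lie in $G_1\smallsetminus H$. Conjugating by the \emph{last} syllable gives $g_ngg_n^{-1}=(g_ng_0g_1)\,g_2g_3\cdots g_{n-1}$, where the parenthesised element lies in $G_1$ while $g_2,\dots,g_{n-1}$ are the original syllables, hence lie alternately in $G_2\smallsetminus H$ and $G_1\smallsetminus H$ and both begin and end in $G_2\smallsetminus H$. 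Reading off a reduced decomposition of $g_ngg_n^{-1}$ (the element $g_ng_0g_1$ becomes the new $H$-part if it lies in $H$, and otherwise is the new first syllable) yields $l(g_ngg_n^{-1})\le n-1<l(g)$. By the inductive hypothesis $g_ngg_n^{-1}$, hence $g$, is conjugate to a cyclically reduced element or to an element of $G_1\cup G_2$.

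For the second assertion, let $g$ be cyclically reduced with reduced form $g=g_0g_1\cdots g_n$, $n\ge 2$ even; after possibly interchanging $G_1$ and $G_2$ we may assume $g_1\in G_1\smallsetminus H$, and then $g_n\in G_2\smallsetminus H$ since $n$ is even. Set $a=g_0g_1\in G_1\smallsetminus H$, so that $g=ag_2g_3\cdots g_n$ is an alternating product of $n$ syllables none of which lies in $H$, beginning in $G_1\smallsetminus H$ and ending in $G_2\smallsetminus H$. For every $k\ge 1$ the word $g^k=(ag_2\cdots g_n)^k$ is then again an alternating product of $kn$ syllables, all outside $H$: at each seam the final syllable $g_n\in G_2\smallsetminus H$ of one block meets the initial syllable $a\in G_1\smallsetminus H$ of the next, so no syllable is absorbed into $H$ and none merges with its neighbour. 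By the characterisation of reduced decompositions this word is already reduced, whence $l(g^k)=kn\ge 2$; in particular $g^k\ne e$, so $g$ has infinite order.

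I expect the only delicate point to be the length count in the odd case: conjugating by $g_n$ rather than by some other element is precisely what collapses the three syllables $g_0,g_1,g_n$ into one and forces $l$ to strictly decrease, so that the induction terminates. The infinite-order statement is then routine once one observes that ``cyclically reduced'' is exactly the hypothesis guaranteeing that powers stay reduced.
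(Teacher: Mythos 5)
Your proof is correct. Note, however, that the paper does not actually argue this lemma itself: it simply cites \cite[\S1.3, Proposition 2]{serre}, so your route is genuinely different in that it is self-contained, resting only on the normal form of Lemma~\ref{lemma-reduced-decomposition} and on the remark following it that the length of an alternating decomposition $g_0g_1\cdots g_n$ (with $g_0\in H$ and the $g_i$ alternately in $G_1\smallsetminus H$ and $G_2\smallsetminus H$) is independent of the choice of coset representatives. Both halves of your argument are sound: in the odd-length case, conjugating by the last syllable $g_n$ merges $g_n,g_0,g_1$ into a single $G_1$-element, and whether that element lands in $H$ or in $G_1\smallsetminus H$ the resulting alternating word has at most $n-1$ syllables, so the induction on $l(g)$ terminates; and for a cyclically reduced $g$ the even length forces the last syllable and the (absorbed) first syllable of consecutive blocks of $g^k$ into different factors, so $g^k$ has an alternating decomposition of $kn\geq 2$ syllables and hence $g^k\neq e$ by uniqueness of the normal form. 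What the citation buys the paper is brevity and no need to re-verify the equivalence between reduced and alternating decompositions; what your argument buys is a short, elementary, and verifiable proof (essentially the standard one underlying Serre's proposition, also found in \cite{magnus}) that keeps the Bass--Serre section self-contained.
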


\begin{proof}
  See \cite[\S1.3 Proposition 2]{serre}.
\end{proof}

\begin{lemma}
\mylabel{lemma-cyclically-reduced-conjugate}
  If $g$ is a cyclically reduced element of $G_1*_HG_2$ which is
  conjugate to an element of the form $r_1r_2\ldots r_k$,
  where $k\geq 2$
  and $r_i$, $r_{i+1}$ as well as $r_1$, $r_k$ are in distinct
  factors then $g$ can be obtained by cyclically permuting
  $r_1,r_2,\ldots r_k$ and then conjugating by an element of $H$.
\end{lemma}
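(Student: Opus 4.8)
The plan is to fix $x$ with $g=xwx^{-1}$ (conjugacy is symmetric, so we may assume this form), write $w=r_1r_2\cdots r_k$, and induct on the length $l(x)$. First I would record two easy preliminaries. Since the $r_i$ alternate between the factors and $r_1,r_k$ lie in distinct factors, $k$ is even, and $r_1\cdots r_k$ already has the shape of a reduced decomposition in the sense following Lemma~\ref{lemma-reduced-decomposition}; hence $l(w)=k\ge 2$ and $w$ is cyclically reduced. Moreover any cyclic permutation $r_{j+1}\cdots r_kr_1\cdots r_j$ of $w$ again satisfies the hypotheses of the lemma (consecutive syllables, and the two end syllables, still lie in distinct factors). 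The base case $l(x)=0$ is then immediate: $x\in H$ and $g={}^{x}w$ is $w$ conjugated by an element of $H$ with the trivial cyclic permutation, which is one of the allowed forms.

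For the inductive step, write $x=x_0x_1\cdots x_m$ as a reduced decomposition with $x_0\in H$, the $x_i$ ($i\ge1$) alternating syllables, $m=l(x)\ge1$, and put $x'=x_0x_1\cdots x_{m-1}$. Because $r_1$ and $r_k$ lie in distinct factors, the last syllable $x_m$ lies in the same factor as exactly one of them; assume $x_m$ and $r_1$ lie in a common factor $G_i$ (the case $x_m$ in the factor of $r_k$ is symmetric). I would then split on whether $x_mr_1\in H$. If $x_mr_1\notin H$, inspect the word
\[
x_0x_1\cdots x_{m-1}\,(x_mr_1)\,r_2\cdots r_k\,x_m^{-1}x_{m-1}^{-1}\cdots x_1^{-1}x_0^{-1}:
\]
every junction is reduced (consecutive syllables lie in opposite factors, and no interior syllable lies in $H$ — here one uses $x_mr_1\notin H$ and $x_m^{-1}$ lying in the factor opposite to $r_k$), so this \emph{is} the reduced decomposition of $g$, and its length is $2m+k-1$, which is odd. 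This contradicts the criterion that a cyclically reduced element has even length. Hence $x_mr_1=h\in H$; substituting $x_m=hr_1^{-1}$ gives $x_mwx_m^{-1}=h\,(r_2r_3\cdots r_kr_1)\,h^{-1}=h\tilde w h^{-1}$ with $\tilde w=r_2\cdots r_kr_1$ a cyclic permutation of $w$, so $g=(x'h)\,\tilde w\,(x'h)^{-1}$. Absorbing $h\in H$ into the last syllable of $x'$ exhibits $x'h$ with a reduced decomposition of length $m-1$. (The symmetric case yields $\tilde w=r_kr_1\cdots r_{k-1}$ and a conjugator $x'h'$, $h'\in H$, again of length $m-1$.)

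Now the inductive hypothesis applies to $g=(x'h)\tilde w(x'h)^{-1}$, since $\tilde w$ satisfies the hypotheses of the lemma: it gives $g=a\hat w a^{-1}$ with $a\in H$ and $\hat w$ a cyclic permutation of $\tilde w$, hence of $w$. As a composite of cyclic permutations is a cyclic permutation, this is exactly the assertion. I expect the one genuinely delicate point to be the case analysis of the inductive step — specifically, verifying that the displayed word for $g$ is fully reduced so that the parity obstruction ``odd length versus cyclically reduced'' can be invoked; everything else is bookkeeping with syllables and the identities following Lemma~\ref{lemma-reduced-decomposition}. A more geometric alternative would be to let $g$ and $w$ act on the tree $T_G$ of Lemma~\ref{lemma-tree} as hyperbolic translations of amplitude $k$ along axes, use that conjugate elements have $G$-equivalent axes of equal amplitude, and read off the cyclic permutation from the alternating sequence of vertex stabilizers along the axis (cf. Remark~\ref{remark-tree-consequences}); but that route would first require developing the theory of axes of tree automorphisms, which this section does not, so I would keep to the word-combinatorial argument above.
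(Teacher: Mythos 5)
Your proof is correct, but it takes a genuinely different route from the paper, which offers no argument of its own and simply cites Theorem 4.6(iii) of Magnus--Karrass--Solitar \cite{magnus} for this lemma. Your induction on the length of the conjugator, with the case split at the junction $x_mr_1$ (resp.\ $r_kx_m^{-1}$) and the parity obstruction --- the displayed word is a reduced decomposition of odd length $2m+k-1$, whereas a cyclically reduced element has even length $\geq 2$ by the remark following Lemma \ref{lemma-reduced-decomposition} --- is in substance the classical proof of the cited result, and the details check out: the merged syllables $x_{m-1}h$ and $x_1^{-1}x_0^{-1}$ stay outside $H$, the substitution $x_m=hr_1^{-1}$ turns $x_mwx_m^{-1}$ into the cyclic permutation $r_2\cdots r_kr_1$ conjugated by $h\in H$, and composites of cyclic permutations and $H$-conjugations again have the required form, so the inductive hypothesis applies to the shorter conjugator $x'h$. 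What your approach buys is self-containedness: it uses only the normal-form machinery set up after Lemma \ref{lemma-reduced-decomposition}, where the paper outsources the work to \cite{magnus}; the cost is routine syllable bookkeeping. One small reading point: you implicitly take each $r_i$ to lie outside $H$ (otherwise ``in distinct factors'' is vacuous and $w$ need not be reduced); that is the intended reading of the statement and the one used in its application in Lemma \ref{lemma-unique}. Your observation that the geometric alternative via translation axes in $T_G$ would need theory not developed in this section is also accurate, so sticking to the word-combinatorial argument was the right call.
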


\begin{proof}
  See \cite[Theorem 4.6(iii)]{magnus}.
\end{proof}

In particular, in view of the description of the length that
follows Lemma \ref{lemma-reduced-decomposition}, we have:

\begin{lemma}
\mylabel{lemma-length-cyclically-reduced-conjugate}
  If $g$ and $h$ are conjugate cyclically reduced elements of
  $G_1*_HG_2$ then $l(g)=l(h)$.
\end{lemma}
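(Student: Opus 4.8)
The plan is to deduce this from Lemma~\ref{lemma-cyclically-reduced-conjugate} together with the syllable characterization of the length stated just after Lemma~\ref{lemma-reduced-decomposition}.

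First I would bring $g$ into a normal form with no leading $H$-syllable. Since $g$ is cyclically reduced, write $g=g_0g_1\cdots g_n$ with $g_0\in H$, with $n=l(g)\geq 2$ even, and with $g_1,\dots,g_n$ lying alternately in $G_1\smallsetminus H$ and $G_2\smallsetminus H$ and $g_1,g_n$ in distinct factors. Since $g_0\in H$ lies in the same factor as $g_1$, the product $g_0g_1$ still lies in that factor and outside $H$, so after regrouping we have $g=u_1u_2\cdots u_n$ with $u_1=g_0g_1$ and $u_i=g_i$ for $i\geq 2$; here the $u_i$ alternate between $G_1\smallsetminus H$ and $G_2\smallsetminus H$, consecutive ones lie in distinct factors, and---because $n$ is even---so do $u_1$ and $u_n$.

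Next, since $h$ is cyclically reduced and conjugate to $g=u_1\cdots u_n$, I would invoke Lemma~\ref{lemma-cyclically-reduced-conjugate} with $k=n$: it tells us that $h$ arises from $u_1,\dots,u_n$ by a cyclic permutation followed by conjugation by some $b\in H$. A cyclic permutation produces $u_ju_{j+1}\cdots u_nu_1\cdots u_{j-1}$, and because $n$ is even this is again a product of $n$ syllables alternating between $G_1\smallsetminus H$ and $G_2\smallsetminus H$ whose two extreme syllables lie in distinct factors. Conjugation by $b\in H$ changes only the first syllable (to $b$ times it) and the last syllable (to it times $b^{-1}$); since $b\in H$, both resulting syllables stay in the same factors as before and outside $H$. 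Thus $h=w_1w_2\cdots w_n$ with the $w_i$ alternating between $G_1\smallsetminus H$ and $G_2\smallsetminus H$, and the characterization of length following Lemma~\ref{lemma-reduced-decomposition} (taking the $H$-part to be $e$) gives $l(h)=n=l(g)$.

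The whole argument is essentially bookkeeping; the only point that needs genuine care is the claim that a cyclic permutation of an \emph{even}-length alternating syllable word is again alternating with its two end syllables in different factors. This is precisely where evenness of $l(g)$---equivalently, the hypothesis that $g$ is cyclically reduced---enters, and it is what lets us apply Lemma~\ref{lemma-cyclically-reduced-conjugate} with the same $k$ at both ends.
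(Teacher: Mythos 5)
Your proof is correct and follows exactly the route the paper takes: the paper states the lemma as an immediate consequence of Lemma~\ref{lemma-cyclically-reduced-conjugate} together with the syllable description of length following Lemma~\ref{lemma-reduced-decomposition}, and your argument simply makes that bookkeeping (absorbing the $H$-part, cyclic permutation, conjugation by an element of $H$, and the role of evenness) explicit.
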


If $P$, $Q$ are vertices of a tree then $l(P,Q)$ is the length
(i.e. the number of edges) of the shortest path from $P$ to $Q$
and is called the {\em distance} from $P$ to $Q$. The shortest
path is called a {\em geodesic}.

\begin{lemma}
\mylabel{lemma-g-geodesic}
  If $g=ar_1r_2\ldots r_n$ is a reduced decomposition of
  $g$ in $G_1*_HG_2$ with $r_1$ in $G_1$
  then the geodesic from $G_2$ to
  $\presup{g^{-1}}G_2$ contains the vertices: \\
  $G_2$, $\presup{r_n^{-1}}G_1$, $\presup{r_n^{-1}r_{n-1}^{-1}}G_2$,
  \ldots , $\presup{r_n^{-1}r_{n-1}^{-1}\ldots r_1^{-1}}G_2$ if $n$
  is even, and \\
  $G_2$, $G_1$, $\presup{r_n^{-1}}G_2$, $\presup{r_n^{-1}r_{n-1}^{-1}}G_1$,
  \ldots , $\presup{r_n^{-1}r_{n-1}^{-1}\ldots r_1^{-1}}G_2$ if $n$
  odd.
\end{lemma}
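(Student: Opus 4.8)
The plan is to present the displayed list of vertices as a path in $T_G$, to verify that it has no backtracking, and then to quote the standard fact that in a tree a path without backtracking is injective and equals the (unique) geodesic between its endpoints. As a preliminary simplification, since $a\in H\subseteq G_2$ we have $\presup{g^{-1}}G_2=\presup{(r_n^{-1}r_{n-1}^{-1}\cdots r_1^{-1})}G_2$, so only the letters $r_i$ enter the computation; recall from the remark following Lemma \ref{lemma-reduced-decomposition} that each $r_i$ lies in $G_1\smallsetminus H$ or in $G_2\smallsetminus H$, that the $r_i$ alternate between the two factors, and (since $r_1\in G_1$) that $r_i\in G_1$ precisely when $i$ is odd.

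The heart of the adjacency check is the single identity: if $r\in G_j$ then $\presup{r^{-1}}G_j=G_j$, so that for $\{j,k\}=\{1,2\}$ and any $w\in G$
$$\{\presup{w}G_j,\ \presup{wr^{-1}}G_k\}=\{\presup{wr^{-1}}G_j,\ \presup{wr^{-1}}G_k\}=\presup{wr^{-1}}\{G_1,G_2\},$$
which is an edge of $T_G$ by the description of the edge set in \ref{remark-tree-consequences}(iii). Writing the $m$-th vertex of the list as $\presup{w_m}G_{i_m}$, where $w_m$ is the appropriate partial product of the $r_j^{-1}$ and the indices $i_m$ alternate between $1$ and $2$, one passes from $w_m$ to $w_{m+1}$ by multiplying on the right by a single $r^{-1}$ whose letter $r$ lies in $G_{i_m}$ — this is exactly where the alternation of the $r_i$ is matched against the alternation of the $i_m$ — so the identity above shows that every consecutive pair is an edge. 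The two endpoints are correct by inspection: the first vertex is $G_2$ and the last is $\presup{(r_n^{-1}\cdots r_1^{-1})}G_2=\presup{g^{-1}}G_2$.

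To exclude backtracking, suppose the $(m-1)$-st and $(m+1)$-st vertices coincide. Since $i_{m-1}=i_{m+1}$, \ref{remark-tree-consequences}(iii) forces $w_{m+1}^{-1}w_{m-1}\in G_{i_{m+1}}$; but $w_{m+1}^{-1}w_{m-1}$ telescopes to a product $r_kr_{k+1}$ of two consecutive letters, exactly one of which lies in $G_{i_{m+1}}$ (the $r_i$ alternate between the two factors), whence the other does too and therefore lies in $G_1\cap G_2=H$, contradicting that every $r_i$ is a nontrivial coset representative and so lies outside $H$. The stray possibilities at the ends of the list (for $n$ odd it opens with $G_2,G_1,\presup{r_n^{-1}}G_2,\dots$) are handled identically: $G_2=\presup{r_n^{-1}}G_2$ would give $r_n\in G_1\cap G_2=H$. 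Hence the list is a path with no backtracking, so it is the geodesic from $G_2$ to $\presup{g^{-1}}G_2$.

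I do not expect a genuine obstacle here; the only real work is bookkeeping — keeping the two parities of $n$ apart and aligning the rule ``$r_i\in G_1$ iff $i$ is odd'' with the alternation of the factors $G_{i_m}$ read off along the path. Once the adjacency identity above is isolated, the two parities run in parallel, the $n$ odd case being the $n$ even case prefixed by the trivial edge $\{G_2,G_1\}$, and the backtracking exclusion is the same telescoping argument in both.
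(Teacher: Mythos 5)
Your proof is correct and is essentially the paper's own argument: the adjacency identity you isolate is exactly the paper's observation that each $r_i$ rotates the edge $H$ about one of its endpoints, and both proofs conclude by the fact that a path without backtracking in a tree is a geodesic. The only difference is organizational — the paper packages the verification as an induction on $n$, while you check the whole path at once and spell out the no-backtracking (telescoping) step that the paper leaves implicit.
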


\begin{proof}
  It is enough to observe that each $r_i$ rotates the edge $H$ about
  one of its end points and argue by induction: for $n=0$ the
  claim is true. If $n$ is odd then the case of $n-1$ implies that
  $\presup{r_n^{-1}}G_2$, $\presup{r_n^{-1}r_{n-1}^{-1}}G_1$,
  \ldots , $\presup{r_n^{-1}r_{n-1}^{-1}\ldots r_1^{-1}}G_2$
  is a geodesic, and
  the claim follows since any path without backtracking in a tree
  is a geodesic. If $n$ is even then, analogously,
  $\presup{r_n^{-1}}G_1$, $\presup{r_n^{-1}r_{n-1}^{-1}}G_2$,
  \ldots , $\presup{r_n^{-1}r_{n-1}^{-1}\ldots r_1^{-1}}G_2$ is a
  geodesic and the lemma follows.
\end{proof}

\begin{lemma}
\mylabel{lemma-normal-subgroup}
  Suppose that $H_0\subseteq H$ is such a subgroup that for
  $x$ in $G_1\cup G_2$
  an inclusion $\presup{x}H_0\subseteq H$ implies
  $\presup{x}H_0=H_0$.
  Then the normalizer of $H_0$ in $G_1*_HG_2$ may be presented as
  $$
   N_{G_1*_HG_2}(H_0)=N_{G_1}(H_0)*_HN_{G_2}(H_0)
  $$
\end{lemma}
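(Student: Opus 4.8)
The plan is to use the tree $T_G$ for $G = G_1 *_H G_2$ and analyze how an element normalizing $H_0$ must act on it. Write $N = N_G(H_0)$. The easy inclusion is $N_{G_1}(H_0) *_H N_{G_2}(H_0) \subseteq N$: since $H \subseteq N_{G_i}(H_0)$ by the hypothesis applied to $x = e$ (trivially) — actually more carefully, we need $H$ to normalize $H_0$, which follows from the hypothesis applied to each $x \in H$ viewed inside $G_1$, giving $\presup{x}H_0 \subseteq H$ hence $\presup{x}H_0 = H_0$ — so $N_{G_1}(H_0)$ and $N_{G_2}(H_0)$ are subgroups of $G_1, G_2$ containing $H$, their amalgam over $H$ makes sense and maps to $N$, and this map is injective because $G_1 *_H G_2$ restricted to subgroups containing $H$ embeds (the reduced-decomposition normal form of Lemma \ref{lemma-reduced-decomposition} survives). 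So the real content is the reverse inclusion.

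For the reverse inclusion, take $g \in N$ and put it in cyclically reduced form up to conjugacy is not quite the right move; instead I would argue directly using the action on $T_G$. Consider the edge $\mathbf{e}_0$ of $T_G$ stabilized by $H$, with endpoints $G_1$ and $G_2$. The key claim is that $g$ fixes $\mathbf{e}_0$, i.e. $g \in H$ forces — no, rather that $g$ fixes both $G_1$ and $G_2$, equivalently $g \in G_1 \cap G_2 = H$ — that's too strong and false. Let me reconsider: I want to show $g$ lies in the subgroup generated by $N_{G_1}(H_0) \cup N_{G_2}(H_0)$, and by Lemma \ref{lemma-reduced-decomposition} (applied in the sub-amalgam) it suffices to show each piece of a reduced decomposition $g = a r_1 \cdots r_n$ of $g$ in $G$ already normalizes $H_0$, i.e. $r_i \in N_{G_1}(H_0) \cup N_{G_2}(H_0)$. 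So the plan is an induction on $l(g)$.

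The inductive step is where the hypothesis on $H_0$ does its work. Suppose $l(g) = n \geq 1$ and write $g = a r_1 \cdots r_n$ with, say, $r_1 \in G_1 \setminus H$ (the other case is symmetric). Look at the vertex $P = G_2$ and the vertex $Q = \presup{g}G_2$; since $g$ normalizes $H_0$, we have $H_0 = \presup{g^{-1}}H_0$, and $H_0 \subseteq H$ fixes the edge $\mathbf{e}_0$, hence $H_0$ fixes the whole geodesic from $\mathbf{e}_0$ to $g \cdot \mathbf{e}_0$ — wait, $H_0$ fixes $\mathbf{e}_0$ and fixes $g\mathbf{e}_0$ (because $g^{-1}H_0 g = H_0$ fixes $\mathbf{e}_0$, so $H_0$ fixes $g\mathbf{e}_0$). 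In a tree, a group fixing two edges fixes the geodesic between them. Using Lemma \ref{lemma-g-geodesic}, the geodesic between $\mathbf{e}_0$ and $g\mathbf{e}_0$ passes through $G_1$ (since $r_1 \in G_1$, the first step from the $G_2$-side toward $\presup{g^{-1}}G_2$ goes $G_2, G_1, \presup{r_n^{-1}}G_2,\dots$ after relabeling — I will need to match the direction of the lemma carefully, possibly replacing $g$ by $g^{-1}$). In particular $H_0$ fixes the vertex $\presup{r_1}G_2$ (the neighbor of $G_1$ on the far side), so $\presup{r_1^{-1}}H_0 \subseteq G_2$, and since $\presup{r_1^{-1}}H_0 \subseteq G_2$ together with $\presup{r_1^{-1}}H_0$ fixing the edge between $G_1$ and — hmm, I actually get $\presup{r_1^{-1}}H_0 \subseteq G_2 \cap {}^{?}$; the cleaner statement: $H_0$ fixes the edge between $G_1$ and $\presup{r_1}G_2$, which equals $\presup{r_1}\mathbf{e}_0$, so $\presup{r_1^{-1}}H_0$ fixes $\mathbf{e}_0$, i.e. $\presup{r_1^{-1}}H_0 \subseteq H$. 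Now the hypothesis on $H_0$ (with $x = r_1^{-1} \in G_1 \cup G_2$, since $r_1 \in G_1$ so $r_1^{-1} \in G_1$) gives $\presup{r_1^{-1}}H_0 = H_0$, i.e. $r_1 \in N_{G_1}(H_0)$. Then $r_1^{-1}g = a' r_2 \cdots r_n$ has length $n-1$ and still normalizes $H_0$ (as $r_1$ does), so by induction $r_2, \dots, r_n$ normalize $H_0$, and finally $a = r_1\cdots r_n$ conjugated appropriately — $a \in H \subseteq N_{G_i}(H_0)$ automatically.

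The main obstacle I anticipate is purely bookkeeping: matching the orientation conventions in Lemma \ref{lemma-g-geodesic} (which is stated for the geodesic from $G_2$ to $\presup{g^{-1}}G_2$ with $r_1 \in G_1$) against the element whose normalizing property I want to exploit, and making sure that the first nontrivial coset representative really does force a neighbor-vertex of $G_1$ or $G_2$ to be fixed by $H_0$. One must also handle the base case $l(g) \in \{0,1\}$ separately: $l(g) = 0$ means $g \in H$, fine; $l(g) = 1$ means $g = a r_1$ with $r_1 \in G_i \setminus H$, and then $g$ normalizing $H_0$ together with $H \subseteq N_{G_i}(H_0)$ reduces to showing $r_1 \in N_{G_i}(H_0)$, which is the same fixed-vertex argument with the degenerate geodesic $G_{3-i}, G_i$ — here I use that $H_0 = \presup{g^{-1}}H_0 \subseteq \presup{r_1^{-1}a^{-1}}H \cap H$ and invoke the hypothesis. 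Once the geometry is set up correctly the rest is routine. I would also remark that the hypothesis is used exactly in the form "$\presup{x}H_0 \subseteq H \Rightarrow \presup{x}H_0 = H_0$ for $x \in G_1 \cup G_2$," and that without it the reverse inclusion can genuinely fail.
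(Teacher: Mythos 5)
Your argument is correct and is essentially the paper's own proof: both show that $H_0$, being contained in $H$ and in $\presup{g}H$, fixes the geodesic joining the base edge to its $g$-translate, then use Lemma \ref{lemma-g-geodesic} plus the hypothesis ``$\presup{x}H_0\subseteq H\Rightarrow\presup{x}H_0=H_0$'' to force each syllable of the reduced decomposition into $N_{G_1}(H_0)\cup N_{G_2}(H_0)$. The only difference is cosmetic (you peel syllables off the front and induct on length, the paper runs a downward induction from $r_n$), and the bookkeeping you flag, such as absorbing the factor $a\in H$, is harmless since $H$ normalizes $H_0$.
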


\begin{proof}
  The right hand side is well defined since $H_0$ is normal in
  $H$.
  Only the inclusion $N_{G_1*_HG_2}(H_0)\subseteq N_{G_1}(H_0)*_HN_{G_2}(H_0)$
  is not obvious. Let $g=ar_1r_2\ldots r_n$ be the reduced
  decomposition of an element $g$ in $N_{G_1*_HG_2}(H_0)$. We have
  $\presup{g}H_0=H_0\subseteq G_1\cap G_2$ hence
  $$
    H_0\subseteq\presup{g^{-1}}G_1\cap\presup{g^{-1}}G_2\cap G_1\cap G_2
  $$
  and
  therefore the elements of $H_0$ fix all the vertices of the geodesics
  which connect $G_2$ or $G_1$ to $\presup{g^{-1}}G_2$ or
  $\presup{g^{-1}}G_1$. By Lemma \ref{lemma-g-geodesic} we have
  $H_0\subseteq\presup{r_n^{-1}r_{n-1}^{-1}\ldots r_i^{-1}}G_2$ and
  $H_0\subseteq\presup{r_n^{-1}r_{n-1}^{-1}\ldots r_i^{-1}}G_1$ for
  $i=1,2,\ldots,n$, hence
  $\presup{r_ir_{i+1}\ldots r_n}H_0\subseteq G_1\cap G_2=H$ and
  therefore, by a downward induction, $r_i\in N_{G_1}(H_0)\cup N_{G_2}(H_0)$
  for $i=n,n-1,\ldots,1$.
\end{proof}

\begin{lemma}
\mylabel{lemma-conjugate-elements}
  Suppose that $G=E*_HK$ is such that if $x,y\in H$ are conjugate in
  $E$ then they are conjugate in $K$. Then if $x,y\in K$ are
  conjugate in $G$ then they are conjugate in $K$.
\end{lemma}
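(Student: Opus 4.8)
The plan is to transport the single conjugacy $y=\presup{g}x$ to the Bass--Serre tree $T_G$ of Lemma \ref{lemma-tree} and rewrite it there as a chain of conjugacies, each internal to $E$ or to $K$, whose terms all lie in $H$; the hypothesis will then collapse the whole chain to a single conjugacy inside $K$. We may assume that $H$ is properly contained in each of $E$ and $K$, for otherwise $G$ equals $E$ or $K$ and the claim is trivial (when $G=E$ it is the hypothesis applied to the pair $x,y\in K=H$). Let $v_0$ be the vertex of $T_G$ stabilized by $K$, and let $\bar e$ be the distinguished edge, stabilized by $H$, with endpoints stabilized by $E$ and by $K$. By Remark \ref{remark-tree-consequences}(iii) we identify vertices with their stabilizers; in particular no conjugate of $E$ coincides with a conjugate of $K$, and $N_G(E)=E$, $N_G(K)=K$.

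Suppose $x,y\in K$ and $y=\presup{g}x$. Both $x$ and $y$ fix $v_0$ (Remark \ref{remark-tree-consequences}(iii)), and $y$ also fixes $gv_0$ since $y(gv_0)=gxg^{-1}gv_0=gv_0$. As the action of $G$ is without inversions (Lemma \ref{lemma-tree}), the fixed-point set of $y$ is a subtree, so $y$ fixes every vertex and every edge of the geodesic joining $v_0$ to $gv_0$. If this geodesic is trivial then $g$ stabilizes $v_0$, so $g\in K$ and we are done. Otherwise list its edges as $f_0,f_1,\dots,f_{\ell-1}$ in order, with $f_0$ incident to $v_0$, with $f_{\ell-1}$ incident to $gv_0$, and with $f_j,f_{j+1}$ meeting at a vertex $V_{j+1}$. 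For each $j$ choose $c_j\in G$ with $f_j=\presup{c_j}\bar e$; then $\mathrm{Stab}(f_j)=\presup{c_j}H$, so $w_j:=\presup{c_j^{-1}}y\in H$, and the two endpoints of $f_j$ are stabilized by $\presup{c_j}E$ and $\presup{c_j}K$.

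Now the passage from $f_j$ to $f_{j+1}$ is a ``move'' inside one factor. Their common vertex $V_{j+1}$ is a conjugate of $E$ or of $K$ (Remark \ref{remark-tree-consequences}(i)). If it is a conjugate of $K$, then its stabilizer must be $\presup{c_j}K$ and also $\presup{c_{j+1}}K$, so $c_{j+1}=c_jk$ for some $k\in N_G(K)=K$, whence $w_{j+1}=\presup{k^{-1}}w_j$, i.e.\ $w_j$ and $w_{j+1}$ are conjugate in $K$; symmetrically, if $V_{j+1}$ is a conjugate of $E$, then $w_j$ and $w_{j+1}$ are conjugate in $E$. Finally $f_0$ is incident to $v_0$, whose stabilizer $K$ is then one of $\presup{c_0}E,\presup{c_0}K$ and hence equals $\presup{c_0}K$, so $c_0\in K$ and $w_0$ is conjugate to $y$ in $K$; and $f_{\ell-1}$ is incident to $gv_0$, so $\presup{c_{\ell-1}}K=\presup{g}K$, giving $c_{\ell-1}=gk$ with $k\in K$ and $w_{\ell-1}=\presup{k^{-1}}\bigl(\presup{g^{-1}}y\bigr)=\presup{k^{-1}}x$, conjugate to $x$ in $K$.

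Altogether $x,\,w_{\ell-1},\,w_{\ell-2},\,\dots,\,w_0,\,y$ is a chain of elements of $H$ in which each consecutive pair is conjugate in $E$ or in $K$, the two end links being in $K$. Replacing, by the hypothesis, each link that is realized in $E$ by one realized in $K$, and then using transitivity of conjugacy in $K$, we conclude that $x$ and $y$ are conjugate in $K$. The delicate point is the third paragraph --- checking that successive edges of the geodesic are translates of $\bar e$ by group elements that differ only by right multiplication by an element of the vertex group lying between them, so that the passage $f_j$ to $f_{j+1}$ really is a conjugation inside $E$ or inside $K$; this is what turns one conjugacy in $G$ into a chain of conjugacies in the factors, and it uses only the transitivity of Remark \ref{remark-tree-consequences}(ii) together with the identifications of Remark \ref{remark-tree-consequences}(iii).
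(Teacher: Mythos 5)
Your argument is correct, but it takes a different route from the paper: the paper does not prove this lemma at all, it simply cites the conjugacy theorem for amalgamated products (Magnus--Karrass--Solitar, Theorem 4.6(i),(ii)), whose proof there is combinatorial, via reduced and cyclically reduced sequences and induction on length. You instead give a self-contained geometric proof on the Bass--Serre tree: since $y$ fixes both $K$ and $\presup{g}K$, it fixes the geodesic between them, so it lies in every edge stabilizer $\presup{c_j}H$ along the way; stepping across a common vertex of consecutive edges changes $c_j$ by right multiplication by an element of the vertex stabilizer, producing a chain of elements of $H$ that are consecutively conjugate inside $E$ or inside $K$, and the hypothesis converts each $E$-link into a $K$-link. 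This is essentially the standard tree-theoretic proof of the conjugacy theorem, and it is also the geometric counterpart of a length-reduction induction one could run with reduced decompositions and Lemma \ref{lemma-g-geodesic}. What your version buys is self-containedness within the toolkit the paper already sets up (Lemma \ref{lemma-tree} and Remark \ref{remark-tree-consequences}); what the citation buys is brevity. The small auxiliary facts you use are all legitimate and easily justified in the paper's framework: the degenerate cases $H=E$ or $H=K$ are correctly set aside; the distinctness of conjugates of $E$ from conjugates of $K$, and the identities $N_G(E)=E$, $N_G(K)=K$, follow from Remark \ref{remark-tree-consequences}(iii) (a vertex is fixed by $x$ iff $x$ lies in its stabilizer, and the stabilizer assignment is one-to-one), though you might spell out the one-line deduction that $g\in N_G(K)$ fixes the vertex $K$ and hence lies in $K$.
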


\begin{proof}
  See \cite[Theorem 4.6 (i) and (ii)]{magnus}.
  \comment{
  Suppose, to the contrary, that some $x,y\in K$ are conjugate in
  $G$ but not in $K$. Let $g\in G$ be such that $\presup{g}x=y$ and
  $n=l(g)$ is minimal. In the reduced decomposition $g=ar_1r_2\ldots
  r_n$ we have $r_1\notin K$ and $r_n\notin K$ hence $n$ is odd. We have
  $x\in K$ and $\presup{g}x\in K$, hence $x\in\presup{g^{-1}}K$,
  and therefore $x$ fixes the vertices of the geodesic from $K$ to
  $\presup{g^{-1}}K$. Lemma \ref{lemma-g-geodesic}
  implies that $x\in\presup{r_n^{-1}}K$, that is, $\presup{r_n}x\in
  K$. There exists $r_n'\in K$ such that
  $\presup{r_n'}x=\presup{r_n}x$. If $r_n$ is replaced by $r_n'$ we
  obtain $g'$ such that $\presup{g'}x=y$ and $l(g')<l(g)$, which is a
  contradiction.
  }
\end{proof}

\begin{lemma}
\mylabel{lemma-tree-fixed-points}
  If $g\in G=G_1*_HG_2$ then the action of $g$ on $T_G$ satisfies one
  of the following:
  \begin{itemize}
    \item[a)] $g$ has no fixed points.
    \item[b)] $g$ fixes a unique vertex of $T_G$.
    \item[c)] $g$ fixes a conjugate of $G_1$ and a conjugate of
    $G_2$.
  \end{itemize}
\end{lemma}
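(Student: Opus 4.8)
The plan is to study the set $T_G^g=\{V\in T_G\mid gV=V\}$ of vertices fixed by $g$ and to show that its three possible ``sizes'' --- empty, a single vertex, or at least two vertices --- correspond exactly to alternatives (a), (b) and (c). The cases ``empty'' and ``single vertex'' are literally (a) and (b), so the only thing to prove is that if $g$ fixes at least two vertices then it fixes a conjugate of $G_1$ and a conjugate of $G_2$.

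The key observation is that if $g$ fixes two distinct vertices $P$ and $Q$, then it fixes pointwise the geodesic from $P$ to $Q$. Indeed, $g$ acts on $T_G$ by a tree automorphism, so it carries this geodesic to the geodesic from $gP=P$ to $gQ=Q$; by uniqueness of geodesics in a tree this is the same path, and since $g$ fixes both of its endpoints, the vertex at distance $i$ from $P$ along the geodesic is sent to the vertex at distance $i$ from $P$, i.e. to itself. In particular $g$ fixes both ends $V,W$ of the first edge of that geodesic (the clause ``no element of $G$ may swap the ends of an edge'' of Lemma \ref{lemma-tree} would also give this from the mere fact that $g$ preserves the edge $\{V,W\}$ setwise). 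By the description of the edge set in Remark \ref{remark-tree-consequences}(iii), we have $\{V,W\}=\{\presup{h}G_1,\presup{h}G_2\}$ for some $h\in G$, so $g$ fixes a conjugate of $G_1$ and a conjugate of $G_2$, which is case (c).

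This is a routine instance of the standard elliptic/hyperbolic dichotomy for group actions on trees, and I do not anticipate a genuine obstacle; the only point requiring a moment's care is that a $g$-invariant edge is automatically fixed pointwise, which --- as noted --- follows either from the geodesic argument above or directly from the no-inversion clause of Lemma \ref{lemma-tree}. Everything else is bookkeeping with the identifications of Remark \ref{remark-tree-consequences}(iii).
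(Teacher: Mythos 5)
Your proposal is correct and follows essentially the same route as the paper: if $g$ fixes two vertices it fixes the geodesic between them pointwise (by uniqueness of geodesics), and since adjacent vertices of $T_G$ are a conjugate of $G_1$ and a conjugate of $G_2$, case (c) follows. Your write-up merely spells out the pointwise-fixing step in more detail than the paper does.
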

\begin{proof}
  If a) and b) do not hold then $g$ fixes two points $P$ and $Q$.
  By the uniqueness it has to fix the geodesic from $P$ to $Q$.
  The proof is complete since the vertices of every path are
  conjugates of $G_1$ and $G_2$ alternately.
\end{proof}

\begin{lemma}
\mylabel{lemma-finite-conjugate}
  Let $F\subseteq G_1*_HG_2$ be a finite subgroup. Then $F$ is
  conjugate to a subgroup of $G_1$ or $G_2$.
\end{lemma}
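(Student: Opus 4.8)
The plan is to use the action of $G = G_1 *_H G_2$ on the tree $T_G$ and show that a finite subgroup $F$ must have a global fixed point; that vertex, being (by Remark~\ref{remark-tree-consequences}(iii)) a conjugate $\presup{g}G_i$, then contains $F$, which is exactly the claim since $x \in \presup{g}G_i$ iff $x \in G_i$ conjugated, i.e.\ $\presup{g^{-1}}F \subseteq G_i$.

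So the core is the fixed-point statement: a finite group acting on a tree has a fixed vertex. First I would recall that each element $g \in F$, by Lemma~\ref{lemma-c-reduced}, is conjugate either to a cyclically reduced element (which has infinite order) or to an element of $G_1 \cup G_2$; since $g$ has finite order, it must be the latter, so every element of $F$ fixes some vertex of $T_G$ (by Remark~\ref{remark-tree-consequences}(iii), an element lies in a conjugate of a factor iff it fixes the corresponding vertex). Now I would promote ``each element fixes a vertex'' to ``$F$ fixes a vertex'' by the standard argument: proceed by induction on $|F|$. Pick a generating set; if $F$ is cyclic it is handled above. In general, write $F = \langle F_0, x\rangle$ with $F_0$ a proper subgroup having a nonempty fixed-point set $T^{F_0}$, which is a subtree (intersection of the fixed subtrees of its elements, each of which is a subtree because $T_G$ is a tree and fixed-point sets of tree automorphisms without inversions are connected — note the last sentence of Lemma~\ref{lemma-tree} rules out inversions). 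The element $x$ also fixes a vertex, hence a subtree $T^{\langle x\rangle}$; and $x$ normalizes $F_0$'s action only if... — rather, the cleaner route: take $v \in T^{F_0}$ and $w \in T^{\langle x \rangle}$ minimizing $l(v,w)$, and argue the midpoint-type vertex on the geodisc is fixed by all of $F$. The standard clean statement I would actually invoke: if $A, B$ are subgroups of $\mathrm{Aut}(T)$ each with nonempty fixed subtree and $\langle A, B\rangle$ has no inversions, and $T^A \cap T^B = \emptyset$, then some element of $\langle A,B\rangle$ is hyperbolic (acts without fixed point), which for a finite group is impossible by the order argument above; hence $T^A \cap T^B \neq \emptyset$ and one iterates over a finite generating set.

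The main obstacle is making the ``bridge'' argument precise: given two disjoint subtrees $T^{F_0}$ and $T^{\langle x\rangle}$, one must produce an element of $F$ of infinite order, contradicting finiteness. Concretely, let $[v,w]$ be the geodesic realizing the distance between the two subtrees, with $v \in T^{F_0}$, $w \in T^{\langle x\rangle}$, and let $v'$ be the neighbor of $v$ on $[v,w]$. Some element $a \in F_0$ must move $v'$ (else $v' \in T^{F_0}$, contradicting minimality), and similarly some $b \in \langle x \rangle$ moves the neighbor $w'$ of $w$; then a direct computation with Lemma~\ref{lemma-g-geodesic} shows $ba$ (or a suitable word) translates along a line through $[v,w]$, hence is cyclically reduced up to conjugacy, hence of infinite order by Lemma~\ref{lemma-c-reduced} — the contradiction. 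I would streamline this by just citing the classical fact (Serre, \cite[\S6.5, Cor.~3]{serre}) that a group acting on a tree with a bounded orbit — in particular a finite group — fixes a vertex, and then the lemma is immediate from Remark~\ref{remark-tree-consequences}(iii). Given the paper's reliance on \cite{serre}, invoking that corollary is the expected and cleanest path, with the tree-action setup already in hand from Lemma~\ref{lemma-tree}.
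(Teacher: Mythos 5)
Your proposal is correct and amounts to the paper's own approach: the paper disposes of this lemma by citing \cite[Chapter II, Corollary A3]{brown} (the same fact is in \cite{serre}), which is precisely the classical statement you establish, namely that a finite group acting without inversions on a tree fixes a vertex, after which Remark \ref{remark-tree-consequences}(iii) identifies that vertex with a conjugate of $G_1$ or $G_2$. Your reduction via Lemma \ref{lemma-c-reduced} (every element of $F$ is elliptic) together with the induction/bridge argument is the standard proof of that cited result, so nothing essential is missing.
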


\begin{proof}
  See \cite[Chapter II, Corollary A3]{brown}.
\end{proof}

A doubly infinite chain
\begin{center}
\def\piccent{12}
\begin{picture}(246,29)
  \put(23,\piccent){\circle{4}}
  \put(73,\piccent){\circle{4}}
  \put(123,\piccent){\circle{4}}
  \put(173,\piccent){\circle{4}}
  \put(223,\piccent){\circle{4}}
  \put(6,\piccent){\line(1,0){15}}
  \put(25,\piccent){\line(1,0){46}}
  \put(75,\piccent){\line(1,0){46}}
  \put(125,\piccent){\line(1,0){46}}
  \put(175,\piccent){\line(1,0){46}}
  \put(225,\piccent){\line(1,0){15}}
  \put(260,\piccent){\makebox(0,0){$\ldots$}}
  \put(-14,\piccent){\makebox(0,0){$\ldots$}}
\end{picture}
\end{center}
is called a {\em straight path}.

\begin{lemma}
\mylabel{lemma-serre-24}
  Let $s$ be an automorphism acting on a tree $X$ with no fixed
  points. Let
  $$
  m = \inf_{P\in \mathop{\rm vert} X} l(P,sP) \mbox{\ \ and\ \ }
  T = \{P\in \mathop{\rm vert} X \mid l(P,sP)=m\}
  $$
  Then:
  \begin{itemize}
    \item[i)] $T$ is the vertex set of a straight path of $X$.
    \item[ii)] $s$ induces a translation of $T$ of amplitude $m$.
    \item[iii)] Every subtree of $X$ stable under $s$ and $s^{-1}$
    contains $T$.
    \item[iv)] If a vertex $Q$ of $X$ is at a distance $d$ from
    $T$ then $l(Q,sQ)=m+2d$.
  \end{itemize}
\begin{center}
\def\piccent{14}
\begin{picture}(246,80)
  \put(23,\piccent){\put(0,50){\circle{4}}}
  \put(223,\piccent){\put(0,50){\circle{4}}}
  \put(23,\piccent){\circle{4}}
  \put(73,\piccent){\circle{4}}
  \put(123,\piccent){\circle{4}}
  \put(173,\piccent){\circle{4}}
  \put(223,\piccent){\circle{4}}
  \put(23,\piccent){\put(0,2){\line(0,1){46}}}
  \put(223,\piccent){\put(0,2){\line(0,1){46}}}
  \put(25,\piccent){\line(1,0){46}}
  \put(75,\piccent){\line(1,0){46}}
  \put(125,\piccent){\line(1,0){46}}
  \put(175,\piccent){\line(1,0){46}}
  \put(225,\piccent){\line(1,0){15}}
  \put(6,\piccent){\line(1,0){15}}
  \put(123,\piccent){\put(-11,7){\vector(1,0){22}}}
  \put(23,\piccent){\put(0,-16){\makebox(0,0)[b]{$P'$}}}
  \put(223,\piccent){\put(0,-16){\makebox(0,0)[b]{$sP'$}}}
  \put(123,\piccent){\put(0,10){\makebox(0,0)[b]{$s$}}}
  \put(23,\piccent){\put(-5,50){\makebox(0,0)[r]{$Q$}}}
  \put(223,\piccent){\put(5,50){\makebox(0,0)[l]{$sQ$}}}
  \put(260,\piccent){\makebox(0,0){$\ldots$}}
  \put(-14,\piccent){\makebox(0,0){$\ldots$}}
\end{picture}
\flushright{($m=4, d=1$)}
\end{center}

\end{lemma}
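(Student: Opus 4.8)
The plan is to study, for each vertex $P$, how the geodesic $[P, sP]$ meets its $s$-translate $[sP, s^2P]=s[P, sP]$. These two geodesics share the endpoint $sP$, so in the tree $X$ their intersection is a segment $[R_P, sP]\subseteq[P, sP]$; write $c(P)=l(R_P, sP)\in\{0,1,\dots,l(P, sP)\}$ for its length. Then $R_P$ lies on $[sP, s^2P]$ at distance $c(P)$ from $sP$, while its translate $sR_P$ lies on the same geodesic at distance $l(P, sP)-c(P)$ from $sP$; since $[sP, s^2P]$ is a geodesic this yields
$$ l(R_P, sR_P)=\bigl|\,l(P, sP)-2c(P)\,\bigr|. $$
The right-hand side is strictly less than $l(P, sP)$ unless $c(P)=0$ or $c(P)=l(P, sP)$. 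Hence if $P\in T$, so that $l(P, sP)=m$ is as small as possible, we must have $c(P)\in\{0,m\}$; and $c(P)=m$ would force $R_P=P$, whence $[P, sP]=[sP, s^2P]$ and $s^2P=P$, so $s$ reverses the geodesic $[P, sP]$ and therefore fixes its midpoint (a vertex, or an edge-midpoint that $s$ inverts) --- excluded here, cf.\ \ref{lemma-tree}. Thus $c(P)=0$ for every $P\in T$.

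Next I would assemble the straight path. If $P\in T$ then every $s^kP$ ($k\in\mathbb{Z}$) lies in $T$ as well, since $l(s^kP, s^{k+1}P)=l(P, sP)=m$; hence $c(s^kP)=0$ for all $k$, which says precisely that consecutive geodesics $[s^kP, s^{k+1}P]$ and $[s^{k+1}P, s^{k+2}P]$ meet only in $\{s^{k+1}P\}$. Concatenating them over all $k$ therefore produces a path without backtracking, i.e.\ a straight path $L$, on which $s$ visibly acts as the translation of amplitude $m$ carrying $[s^kP, s^{k+1}P]$ to $[s^{k+1}P, s^{k+2}P]$.

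The decisive computation is now the distance formula, which identifies $T$ with the vertex set of $L$ and simultaneously proves (iv). Let $Q$ be any vertex, $Q_0$ the point of $L$ nearest $Q$, and $d=l(Q, Q_0)=l(Q, L)$. Then $sQ_0$ is the point of $L$ nearest $sQ$, at distance $d$ from it, and $[Q_0, sQ_0]$ is the length-$m$ subsegment of $L$ joining them. The concatenation $[Q, Q_0]\cup[Q_0, sQ_0]\cup[sQ_0, sQ]$ has no backtracking --- at $Q_0$ the edge coming from $Q$ lies off $L$ whereas the edge going toward $sQ_0$ lies on $L$, and symmetrically at $sQ_0$ --- so it is a geodesic and $l(Q, sQ)=m+2d$. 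This is (iv). In particular $l(Q, sQ)=m$ forces $d=0$, i.e.\ $Q\in L$; conversely each vertex of $L$ satisfies $l(Q, sQ)=m$. Hence $T$ is exactly the vertex set of the straight path $L$, giving (i), and $s$ translates it by $m$, giving (ii). For (iii): if $X'$ is a subtree of $X$ stable under $s$ and $s^{-1}$, pick $v\in X'$; then all $s^kv$ lie in $X'$, and since $X'$ is connected it contains each geodesic $[s^kv, s^{k+1}v]$. By the distance formula (applied with $Q=s^kv$) this geodesic meets $L$ in the length-$m$ subsegment $[s^kv_0, s^{k+1}v_0]$, where $v_0$ is the point of $L$ nearest $v$; as $k$ ranges over $\mathbb{Z}$ these subsegments exhaust $L$, so $X'\supseteq L=T$. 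I expect the only genuinely delicate point to be the opening overlap computation together with the correct exclusion of the degenerate cases $s^2P=P$ / inverted edges via the fixed-point hypothesis; after that, everything reduces to the standard fact that a backtrack-free path in a tree is geodesic.
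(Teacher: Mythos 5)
The paper gives no argument for this lemma at all: its ``proof'' is the citation to Serre's \emph{Trees}, \S 6.4, Proposition 24. Your write-up is therefore genuinely different simply by being a self-contained proof, and it is correct; it is in essence the classical translation-axis argument. The overlap computation $l(R_P,sR_P)=|l(P,sP)-2c(P)|$ is right, the conclusion $c(P)=0$ on $T$ is what makes the concatenation of the geodesics $[s^kP,s^{k+1}P]$ backtrack-free, and the nearest-point projection to the resulting line $L$ (which commutes with $s$ because $sL=L$) gives the formula $l(Q,sQ)=m+2d$, from which $T=\mathop{\rm vert}L$ and items (i)--(iv) all follow in the order you state them; the appeal to ``a backtrack-free path in a tree is a geodesic'' is legitimate at both places it is used. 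The one point worth making explicit is the one you flag in passing: the hypothesis ``no fixed points'' alone does not suffice, since an automorphism inverting an edge fixes no vertex yet is not a translation along a straight path (for a reflection of the line across an edge-midpoint, $T$ consists of just the two ends of that edge), so the no-inversion assumption is genuinely needed. You import it correctly from the paper's setting via Lemma \ref{lemma-tree}, and that is exactly the situation in which the lemma is later applied (to $T_K$ and $T_L$, where no element swaps the ends of an edge); if you wanted the lemma in the generality in which it is stated, you would have to add ``without inversion'' to the hypotheses, which is also how Serre's Proposition 24 is meant to be read. Two trivial touch-ups: in the degenerate case $c(P)=m$ you should say $R_P=P$ forces $[P,sP]\subseteq[sP,s^2P]$ and hence equality because both have length $m$; and in (iii) the invariant subtree should of course be nonempty. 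What the two routes buy is clear: the paper's citation keeps Section 2 to a list of imported facts, while your argument makes the result self-contained at the cost of a page of standard tree geometry.
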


\begin{proof}
  See \cite[\S 6.4, Proposition 24]{serre}.
\end{proof}

\begin{lemma}
\mylabel{lemma-length-cyclically-reduced}
  Let $g$ be a cyclically reduced element of $G=G_1*_HG_2$.
  Let $X=T_G$ and $m$ and $T$ be as in Lemma \ref{lemma-serre-24},
  where $s$ is the action of $g$ on $T_G$.
  Then $l(g)=m$ and both $G_1$ and $G_2$ belong to $T$.
\end{lemma}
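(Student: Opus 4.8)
The plan is to work directly with the tree $T_G$ and the reduced decomposition of $g$, combining Lemma \ref{lemma-g-geodesic} with Lemma \ref{lemma-serre-24}. Since $g$ is cyclically reduced we have $l(g)=n$ even and $n\geq 2$; fix a reduced decomposition $g=ar_1r_2\ldots r_n$. By cyclic reducedness one of $r_1$, $r_n$ lies in $G_1\smallsetminus H$ and the other in $G_2\smallsetminus H$. I will treat the case $r_1\in G_1\smallsetminus H$ (so, since $n$ is even, $r_n\in G_2\smallsetminus H$); the other case is symmetric. Then $\presup{g}G_2=\presup{ar_1\ldots r_n}G_2=\presup{ar_1\ldots r_{n-1}}G_2$ because $r_n\in G_2$, and more to the point I want to compute $l(G_2,\presup{g}G_2)$ or, equivalently after applying $g^{-1}$, work with the geodesic from $G_2$ to $\presup{g^{-1}}G_2$ given explicitly by Lemma \ref{lemma-g-geodesic}.

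\medskip

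\noindent\textbf{Step 1: $g$ acts without fixed points.} Lemma \ref{lemma-c-reduced} says every cyclically reduced element is of infinite order, and by Lemma \ref{lemma-tree-fixed-points} combined with Lemma \ref{lemma-finite-conjugate} (or directly: a fixed vertex would put $g$ inside a conjugate of $G_1$ or $G_2$, which would make $l(g)\leq 1$), $g$ has no fixed point on $T_G$. Hence Lemma \ref{lemma-serre-24} applies and the invariants $m$ and $T$ are defined.

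\medskip

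\noindent\textbf{Step 2: $G_2\in T$ and $l(g)\le m$, in fact $l(G_2,gG_2)=l(g)$.} Apply Lemma \ref{lemma-g-geodesic} (with $r_1\in G_1$, $n$ even) to see that the geodesic from $G_2$ to $\presup{g^{-1}}G_2$ has exactly $n$ edges, so $l(G_2,\presup{g^{-1}}G_2)=n=l(g)$. Applying the isometry $g$, $l(gG_2,G_2)=l(g)$ as well; since in Lemma \ref{lemma-serre-24} one measures $l(P,sP)$ and the tree action is by left translation, I get $l(G_2, gG_2)=l(g)$. Therefore $m\leq l(g)$, and I must still show $G_2$ realizes the minimum, i.e. $m=l(g)$ and $G_2\in T$.

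\medskip

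\noindent\textbf{Step 3: the minimum is not smaller — equality and $G_1,G_2\in T$.} For this I use part (iv) of Lemma \ref{lemma-serre-24}: any vertex $Q$ at distance $d$ from $T$ satisfies $l(Q,gQ)=m+2d$, and by (ii) the translation axis $T$ consists exactly of the vertices on which $g$ achieves $m$. The key observation is that the geodesic from $G_2$ to $\presup{g^{-1}}G_2$ produced by Lemma \ref{lemma-g-geodesic} passes through both $G_1$ and $G_2$ (taking the appropriate parity case, $G_1$ appears as the vertex $\presup{r_n^{-1}}G_1$ after cyclic bookkeeping, or one simply notes $G_1$ and $G_2$ are the two endpoints of the fundamental edge and the path from $G_2$ through the edge $\{G_1,G_2\}$ starts the geodesic). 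Since both $G_1$ and $G_2$ are vertices of the bi-infinite translation path —- here is where I invoke that $g$ being cyclically reduced means the geodesic $G_2,\presup{r_n^{-1}}G_1,\ldots,\presup{g^{-1}}G_2$ has no backtracking at its endpoints either, so concatenating its $g^k$-translates yields a straight path which must be $T$ by part (iii) of Lemma \ref{lemma-serre-24} — the amplitude of that straight path is its edge-count per period, namely $l(g)$. Combining with Step 2, $m=l(g)$, and $G_1$, $G_2$, lying on $T$, are in $T$.

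\medskip

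\noindent I expect the main obstacle to be Step 3: rigorously verifying that the translates $\{g^k\cdot(\text{geodesic from }G_2\text{ to }\presup{g^{-1}}G_2)\}_{k\in\mathbb Z}$ glue into a genuine straight path with no backtracking at the junctions $g^kG_2$. This is precisely the point where cyclic reducedness of $g$ (equivalently, that $r_1$ and $r_n$ lie in distinct factors) is used: if $g$ were reduced but not cyclically reduced the junctions would backtrack and the path would not be straight, so the hypothesis must enter here. Once the glued path is shown to be straight and $g$-stable, part (iii) of Lemma \ref{lemma-serre-24} forces it to contain $T$; comparing amplitudes (the glued path has period $l(g)$ and $T$ has period $m\le l(g)$ by Step 2) pins down $m=l(g)$ and places both factors on $T$.
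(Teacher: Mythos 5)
Your overall strategy is the same as the paper's (build the axis as the union of the $g^k$-translates of the geodesic from $G_2$ to $\presup{g^{-1}}G_2$, then invoke Lemma \ref{lemma-serre-24}(iii)), but the one step you yourself flag as ``the main obstacle'' --- that the translates glue with no backtracking at the junctions $\presup{g^k}G_2$ --- is exactly the heart of the proof, and you never supply it. The missing idea is short: because $g$ is cyclically reduced, $r_n$ and $r_1$ lie in distinct factors, so $l(g^2)=2l(g)$; applying Lemma \ref{lemma-g-geodesic} to $g^2$ then gives $l(\presup{g^k}G_2,\presup{g^{k+2}}G_2)=2l(g)$, while the concatenation $[\presup{g^{k+2}}G_2,\presup{g^{k+1}}G_2]\cup[\presup{g^{k+1}}G_2,\presup{g^{k}}G_2]$ is a path with exactly $2l(g)$ edges joining those two vertices, hence is a geodesic and there is no backtracking at $\presup{g^{k+1}}G_2$. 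Since $T_G$ is a tree this makes the union $T_0$ a straight path on which $g$ translates by $l(g)$, and only then does \ref{lemma-serre-24}(iii) give $T\subseteq T_0$, hence $T=T_0$, $m=l(g)$ and $G_2\in T$. Without some such argument your Step 3 is an assertion, not a proof. (Your Step 1 justification is also off: an element lying in a conjugate of $G_1$ or $G_2$ need not have length $\leq 1$; fixed-point-freeness is either presupposed by the statement or falls out once the translation along $T_0$ is established.)

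A second, concrete error: in the case $r_1\in G_1$, $r_n\in G_2$ the geodesic of Lemma \ref{lemma-g-geodesic} does \emph{not} pass through $G_1$ --- its first edge is $\{G_2,\presup{r_n^{-1}}G_1\}$, and $\presup{r_n^{-1}}G_1\neq G_1$ since $r_n\notin G_1$ --- so your remark that ``the path from $G_2$ through the edge $\{G_1,G_2\}$ starts the geodesic'' is false, and ``cyclic bookkeeping'' does not identify $\presup{r_n^{-1}}G_1$ with $G_1$. The conclusion $G_1\in T$ is still true, but it needs its own argument: either observe that $G_1=\presup{g}\bigl(\presup{r_n^{-1}\cdots r_2^{-1}}G_1\bigr)$ is the $g$-image of an interior vertex of the geodesic and hence lies on $T_0$, or do as the paper does and rerun the whole argument with $g$ replaced by $g^{-1}$ and the roles of $G_1$ and $G_2$ swapped.
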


\begin{proof}
  Let $g=ar_1r_2\ldots r_n$ be the reduced decomposition of $g$.
  Possibly swapping $G_1$ and $G_2$,
  we may assume that $r_1\in G_1$.
  Lemma \ref{lemma-g-geodesic} implies that
  $l(g)=l(G_2,\presup{g^{-1}}G_2)$. Let $[G_2,\presup{g^{-1}}G_2]$ be the geodesic
  from $G_2$ to $\presup{g^{-1}}G_2$. Let
  $T_0=\bigcup_{k\in\mathbb{Z}}\presup{g^k}[G_2,\presup{g^{-1}}G_2]$.
  The union of consecutive geodesics
  $[\presup{g^{k+2}}G_2,\presup{g^{k+1}}G_2]\cup[\presup{g^{k+1}}G_2,\presup{g^k}G_2]$
  is a path of length $2l(g)$ that connects $\presup{g^k}G_2$ and
  $\presup{g^k}(\presup{g^2}G_2)$. Since $g$ is cyclically reduced
  we see that $l(g^2)=2l(g)$. Lemma \ref{lemma-g-geodesic} implies
  that the geodesic connecting $\presup{g^k}G_2$ and
  $\presup{g^k}(\presup{g^2}G_2)$ has length $l(g^2)$ hence the
  path above is a geodesic and consequently, since $T_G$ is a
  tree, the union $T_0$ is a straight path on which $g$ acts as
  translation by
  $l(g)$. Since $T_0$ is invariant under $g$ and
  $g^{-1}$, Lemma \ref{lemma-serre-24}(iii) implies that
  $T\subseteq T_0$, so since they are both straight paths, we have
  $T=T_0$, hence $G_2\in T$ and $l(g)=m$.
  We prove that $G_1\in T$ by repeating the argument above with
  $g$ replaced by $g^{-1}$ and $G_1$ swapped for $G_2$.
\end{proof}

\section{Large $E$-rings}
\mylabel{section-e-rings}

This section describes the ``filling'', which makes targets of our
closed embeddings arbitrarily large. The reader interested in
``infinite'' rather than ``arbitrarily large'' may take
$E=\mathbb{Z}_{(q)}=\{{m\over n}\in\mathbb{Q}\mid q\nmid n\}$.

The notion of an $E$-ring was introduced by Schultz
\cite{schultz}. Let $\hom_R(E,E)$ denote the ring of endomorphisms
of $E$ as a right $R$-module. A ring $E$ with identity $1$ is said
to be an {\em $E$-ring} if the ring restriction homomorphism
$$
  E\cong\hom_E(E,E)\to\hom_\mathbb{Z}(E,E)
$$
is an isomorphism. This forces $E$ to be commutative. Unless
explicitly stated, we work only with the additive group of $E$ and
denote it with the same symbol $E$.

With the terminology outlined in Section
\ref{section-introduction} above we may characterize the additive
groups of $E$-rings as those which admit a nontrivial closed
homomorphism $\mathbb{Z}\to E$, or equivalently, as possible
values of group localizations of the integers.

We make use of a particular class of examples of $E$-rings,
constructed by Dugas, Mader and Vinsonhaler \cite{erings}. The
following theorem is extracted from \cite{erings}.

\begin{thm}
\mylabel{thm-ering}
For any prime number $q$ and an infinite cardinal number $\kappa$,
not strictly between $\aleph_0$ and the continuum, there exists an
abelian group $E$ of cardinality $\kappa$ with the following
properties:
\begin{itemize}
  \item[(1)]
    $E$ is the additive group of an {\em $E$-ring}.
  \item[(2)]
    $E$ is torsion free.
  \item[(3)]
    No nonzero element of $E$ is divisible by all powers of $q$.
  \item[(4)]
    $E$ is $p$-divisible for every prime $p\neq q$.
  \item[(5)]
    All nontrivial endomorphisms $f:E\to E$ are injective.
\end{itemize}
\end{thm}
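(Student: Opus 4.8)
The plan is to locate, inside the construction of Dugas, Mader and Vinsonhaler \cite{erings}, an $E$-ring already carrying most of these features, and then to arrange the prime-by-prime divisibility conditions (3) and (4) by a localization. Concretely, I would first recall from \cite{erings} that for any infinite cardinal $\kappa$ not strictly between $\aleph_0$ and $2^{\aleph_0}$ there is an $E$-ring $A$ of cardinality $\kappa$ whose additive group is torsion free and which is "indecomposable enough" that every nonzero endomorphism is a monomorphism. The torsion-freeness gives (2), the $E$-ring axiom gives (1), and the monomorphism property gives (5); these are the robust features of the cited construction and should be quoted essentially verbatim. The remaining task is purely about divisibility of the underlying abelian group.

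Next I would pass from $A$ to $E := A \otimes_{\mathbb{Z}} \mathbb{Z}[1/p : p \neq q] = A \otimes_{\mathbb{Z}} \mathbb{Z}_{(q)}$, i.e. localize away from every prime except $q$. Since $A$ is torsion free this is an injective, essential extension, so the cardinality is unchanged (here one uses $\kappa \geq \aleph_0$, and the construction already avoids the forbidden cardinal range), and (2) is preserved. Localization at a set of primes is an exact idempotent functor, so it commutes with $\hom_{\mathbb{Z}}$ for finitely generated source; since $A$, being an $E$-ring, is $\aleph_1$-free or at least has $\hom_{\mathbb{Z}}(A,A) \cong A$, one checks that $\hom_{\mathbb{Z}}(E,E) \cong \hom_{\mathbb{Z}}(A,A) \otimes \mathbb{Z}_{(q)} \cong E$ as rings, so $E$ is again an $E$-ring, giving (1). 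By construction $E$ is $p$-divisible for each prime $p \neq q$, which is (4), and tensoring with $\mathbb{Z}_{(q)}$ does not create $q$-divisible elements out of a torsion-free group with no $q$-divisible elements, so (3) holds provided $A$ itself has no nonzero $q$-divisible element — and if the cited $A$ does have such elements, one may instead start from a sub-$E$-ring or quotient by its $q$-divisible part, which in a reduced torsion-free $E$-ring is zero. Finally, a nonzero endomorphism of $E$ restricts (after clearing denominators) to a nonzero endomorphism of a full subgroup isomorphic to $A$, hence is injective on that subgroup and therefore on $E$, giving (5).

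The main obstacle is the bookkeeping in the previous paragraph: verifying that the $E$-ring property survives the localization requires knowing that $\hom_{\mathbb{Z}}(E,E)$ is computed correctly, and this hinges on a freeness or cotorsion-type property of $A$ that the reference \cite{erings} provides but must be cited carefully. A clean alternative, and probably the one I would actually write, is to observe that \cite{erings} constructs $E$-rings as pure subrings of a suitable completion and the parameters of that construction can be fed the ring $\mathbb{Z}_{(q)}$ from the start in place of $\mathbb{Z}$; then $E$ is manufactured directly as an $\mathbb{Z}_{(q)}$-algebra, (3) and (4) are automatic, and (1), (2), (5) come straight out of their theorem with $\mathbb{Z}$ replaced by $\mathbb{Z}_{(q)}$ throughout — the only thing to check is that their arguments never used anything about $\mathbb{Z}$ beyond being a principal ideal domain with enough primes inverted, which is exactly the situation they treat in the generality of \cite{erings}. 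I would phrase the final statement as an extraction, citing the relevant theorem of \cite{erings} with $R = \mathbb{Z}_{(q)}$ and then spending a short paragraph noting that properties (1)--(5) are either listed there or follow immediately from torsion-freeness together with the choice of base ring.
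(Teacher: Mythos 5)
Your preferred ``clean alternative'' is essentially what the paper does: Theorem \ref{thm-ering} is simply extracted from the construction of Dugas, Mader and Vinsonhaler \cite{erings}, in which the prime $q$ is a parameter of the construction (the ring is built as a pure subring of a $q$-adic completion of a suitable $q$-reduced, $p$-divisible base), so (1)--(4) are read off directly; no localization of a previously built $E$-ring is performed. Two remarks on the differences. First, your opening route --- take a DMV ring $A$ and pass to $A\otimes\mathbb{Z}_{(q)}$ --- is exactly where the real work would lie: showing that $\hom_{\mathbb{Z}}(A\otimes\mathbb{Z}_{(q)},A\otimes\mathbb{Z}_{(q)})\cong A\otimes\mathbb{Z}_{(q)}$ is not automatic, and you correctly flag it as the main obstacle before abandoning it; the paper avoids this entirely by building the divisibility into the construction, as in your alternative. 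Second, your claim that property (5) can be ``quoted essentially verbatim'' from \cite{erings} is not accurate --- the paper explicitly notes that (5) is \emph{not} stated there --- and your substitute argument (restricting a nonzero endomorphism to a full subgroup isomorphic to $A$) presupposes injectivity of nonzero maps out of $A$, which is the very point at issue. The paper's derivation is cleaner and is the one you should use: the DMV ring is constructed with no zero divisors, and since every endomorphism of an $E$-ring is multiplication by an element, a nontrivial endomorphism is multiplication by a nonzero element of a domain, hence injective. With that substitution your write-up matches the paper's proof.
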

Item (5) of Theorem \ref{thm-ering} is not explicitly stated in
\cite{erings} but $E$ is constructed as a ring with no zero
divisors (i.e. elements $a\neq 0$, $b\neq 0$ such that $ab=0$).
Since each endomorphism of an $E$-ring is a multiplication by an
element of $E$, item (5) follows.

\section{Construction of closed embeddings}
\mylabel{section-construction}
In this section we construct arbitrarily large groups $L$ and
closed embeddings $S\to L$ with finite source $S$.

Let $S$ be a finite group with no outer automorphisms. Let $a$ and
$b$ be two elements of $S$. Let $A=\langle a \rangle$ be the
cyclic subgroup of $S$ generated by $a$ and let $N=N_S(A)$ be its
normalizer in $S$. We fix a prime number $p>2$ and assume the
following.

\noindent {\bf Properties:}
\begin{itemize}
  \item[\propaorder{.}]
    The order of $a$ is $p$.
  \item[\propbnn{.}]
    The element $b$ is not in $N$.
  \item[\propb2e{.}] $b^2=e$.
  \item[\propcommutes{.}]
    If $k\in S$ commutes with $a$ and $b$ then $k=e$.
  \item[\propautker{.}]
    Any homomorphism $f:S\to S$ is either an automorphism or
        contains $a$ and $b$ in its kernel.
  \item[\prophomzp{.}]
    $S$ contains no element of order $p^2$.
  \item[\propna{.}] $p$ does not divide the order of $N/A$.
  \item[\propbnb{.}]
    The intersection $N\cap\presup{b}N$ is trivial.
\end{itemize}

\begin{rem}
\mylabel{remark-xxa}
  Property {\propbnb} implies that for any $k\in N$
  if $kbk^{-1}b^{-1}\in A$ then $k=e$.
\end{rem}

\begin{rem}
\mylabel{remark-splits}
  Property {\propna} and the Schur-Zassenhauss Theorem imply that
  the exact sequence $\{e\}\to A\to N\to N/A\to\{e\}$
  splits.
\end{rem}

\begin{rem}
\mylabel{remark-E-S}
  For any infinite cardinal number $\kappa$, not strictly between
  $\aleph_0$ and the continuum, there exists an $E$-ring $E$,
  as in Theorem \ref{thm-ering}, which is $p$-divisible for every
  prime $p$ dividing the order of $S$.
  To obtain such an $E$ it is enough to choose the prime $q$
  so that it does not divide the order of $S$
  and use Theorem \ref{thm-ering}(4).
\end{rem}

Let $C=\langle c \rangle$ be a cyclic group of order $p^2$. We
identify $A$ with the subgroup of $C$ generated by $c^p$. The
restriction homomorphism $\Aut(C)\to\Aut(A)$ splits uniquely,
hence the split exact sequence in Remark {\propsplits} extends to
a split sequence:
$$
  \{e\}\to C\to M\to M/C\to\{e\}
$$
where $M/C\cong N/A$. We view $N$ as a subgroup of $M$. We note
that since $p>2$ the unique split of $\Aut(C)\to\Aut(A)$ takes
$a\mapsto a^{-1}$ to $c\mapsto c^{-1}$.

Let
$$K=M*_N S$$
be an amalgam of groups. By \ref{lemma-tree} there exists a
unique, up to isomorphism, tree $T_K$ on which $K$ acts with
fundamental domain
\begin{center}
\def\piccent{20}
\begin{picture}(96,30)
  \put(23,\piccent){\circle{4}}
  \put(73,\piccent){\circle{4}}
  \put(25,\piccent){\line(1,0){46}}
  \put(23,\piccent){\put(0,-16){\makebox(0,0)[b]{$M$}}}
  \put(23,\piccent){\put(25,-12){\makebox(0,0)[b]{$N$}}}
  \put(23,\piccent){\put(50,-16){\makebox(0,0)[b]{$S$}}}
\end{picture}
\end{center}
where the labels denote the stabilizers of the edge and its
vertices.

In a similar way we define
$$L=E*_ZK$$
where the group $E$ is chosen as in Remark \ref{remark-E-S} and
$Z\subseteq E$ is the subgroup generated by the ring identity
$1\in E$ and is identified with $\langle cb \rangle \subseteq K$.
Let $T_L$ be the tree which corresponds to the amalgam $L$. We
denote by $\eta:S\to L$ the inclusion of $S$ into $L$. The
remainder of this paper is devoted to the proof that $\eta$ is
closed and that $M_{11}$ satisfies Properties P1--P8.

\section{Properties of the construction}
\mylabel{section-properties}
In this section we describe some properties of the inclusion
$\eta:S\to L$, introduced in Section \ref{section-construction},
which are used in Section \ref{section-proof}.

\begin{rem}
  By the construction,
  $M$ is generated by $N\cup\{c\}$. Remark
  \ref{remark-amalgam-generation} implies that
  the group $K$ is generated by $S\cup\{c\}$.
  The group $L$ is generated by $K$ and $E$ and therefore
  by $S$ and $E$.
\end{rem}

\begin{lemma}
\mylabel{lemma-normalizer} Let $g=(cb)^v$ for some $v>0$.
  The normalizer of $\langle g \rangle$ in $K$ is
  $\langle cb \rangle$.
\end{lemma}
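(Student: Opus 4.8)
The plan is to read the normalizer off the action of $K$ on its Bass--Serre tree $T_K$, using that $g$ is cyclically reduced together with Property~P8.

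First I would record that $c\in M\smallsetminus N$ (its order is $p^2$, and no element of $S\supseteq N$ has order $p^2$, by~P6) and $b\in S\smallsetminus N$ (by~P2). Hence $(cb)^v=c\cdot b\cdot c\cdot b\cdots c\cdot b$ is a reduced decomposition, so $g=(cb)^v$ is cyclically reduced of length $2v$ and, by Lemma~\ref{lemma-c-reduced}, of infinite order; likewise $cb$ is cyclically reduced of length $2$ and of infinite order. The inclusion $\langle cb\rangle\subseteq N_K(\langle g\rangle)$ is immediate, since $\langle cb\rangle$ is cyclic and contains $g$. Conversely, for $x\in N_K(\langle g\rangle)$ we have $xgx^{-1}=g^k$ with $k\ne 0$ (as $g$ has infinite order); $g^k$ is again cyclically reduced, of length $2v|k|$, so Lemma~\ref{lemma-length-cyclically-reduced-conjugate} forces $|k|=1$.

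The heart of the argument is to rule out $k=-1$, i.e.\ to show $g$ is not conjugate in $K$ to $g^{-1}$. Using $b^2=e$ (P3) we get $g^{-1}=(cb)^{-v}=(bc^{-1})^v$, which is conjugate by a cyclic permutation to $(c^{-1}b)^v$, so it suffices to show $(cb)^v\not\sim(c^{-1}b)^v$ in $K$. If they were conjugate, Lemma~\ref{lemma-cyclically-reduced-conjugate} applied to the cyclically reduced word $(c^{-1}b)^v=r_1r_2\cdots r_{2v}$ would give $(cb)^v=h\bigl(\text{a cyclic shift of }c^{-1},b,c^{-1},b,\dots\bigr)h^{-1}$ with $h\in N$; the shifts multiply to $(c^{-1}b)^v$ or $(bc^{-1})^v$, and in the latter case the right-hand side has a reduced decomposition starting in the factor $S$ (its first term is $hb\in S\smallsetminus N$), whereas $(cb)^v$ starts in $M$ --- impossible by uniqueness of reduced decompositions (Lemma~\ref{lemma-reduced-decomposition}). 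So $(cb)^v=\bigl(h(c^{-1}b)h^{-1}\bigr)^v$. Put $w=h(c^{-1}b)h^{-1}$. Then $w$ and $cb$ are hyperbolic on $T_K$ with the same axis (the axis of $w^v=g$), which they translate with amplitude $2$ in the same direction, so $w^{-1}(cb)$ fixes that axis pointwise. Three consecutive vertices of the axis of $cb$ are $\presup{b}M,\ S,\ M$ (from Lemma~\ref{lemma-g-geodesic} and the segment description in the proof of Lemma~\ref{lemma-length-cyclically-reduced}, using $(cb)^{-1}=bc^{-1}$); hence $w^{-1}(cb)\in N\cap\presup{b}N=\{e\}$ by~P8, so $w=cb$, i.e.\ $h(c^{-1}b)h^{-1}=cb$. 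Rearranging gives $c^{-1}hc^{-1}=bhb^{-1}$, an element of $M\cap S=N$; thus $\presup{b}h\in N$, so $h\in N\cap\presup{b}N=\{e\}$, and then $c^{-1}=c$, contradicting $|c|=p^2>2$.

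Hence $k=1$ and $x\in C_K(g)$. Since $g$ has infinite order it acts on $T_K$ without fixed points (a fixed vertex would lie in a finite conjugate of $M$ or $S$, by Lemma~\ref{lemma-tree-fixed-points}), so by Lemmas~\ref{lemma-serre-24} and~\ref{lemma-length-cyclically-reduced} it translates its axis $P$ by $2v$, while $cb$ translates the same $P$ by $2$, and $\presup{b}M,S,M$ are consecutive on $P$. As $x$ centralizes $g$ it maps $P$ (the axis of $g$) to itself, and it cannot reverse the orientation of $P$ --- that would make $xgx^{-1}$ act on $P$ as $g^{-1}$, whereas $xgx^{-1}=g$. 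So $x$ acts on $P$ as a translation, of amplitude $2s$ for some $s\in\mathbb{Z}$ (even, because $x$ preserves the two $K$-orbits of vertices --- conjugates of $M$, resp.\ of $S$, by Remark~\ref{remark-tree-consequences} --- which alternate along $P$). Then $y=(cb)^{-s}x$ fixes $P$ pointwise, in particular fixes $\presup{b}M,S,M$, so $y\in N\cap\presup{b}N=\{e\}$ by~P8, giving $x=(cb)^s\in\langle cb\rangle$. The one genuinely delicate step is the non-conjugacy $g\not\sim g^{-1}$ in the third paragraph, where the combinatorics of Lemma~\ref{lemma-cyclically-reduced-conjugate} must be combined with uniqueness of normal forms and with~P8; everything else is routine Bass--Serre bookkeeping.
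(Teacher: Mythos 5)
Your proof is correct, and it departs from the paper's at the decisive step, so a comparison is worth recording. Both arguments run on the Bass--Serre tree of $K=M*_NS$ and both kill pointwise stabilizers of a length-two segment of the axis of $g$ using Property P8. The paper shows that $N_K(\langle g\rangle)$ stabilizes the axis, that the restriction map to the automorphisms of the axis is injective (P8 applied to $\presup{c}N\cap\presup{cb}N$), and then excludes orientation-reversing elements by analyzing a putative reflection at the vertex $S$: such an $x$ lies in $S$, satisfies $x^2=e$ and $x(cb)x^{-1}=(cb)^{-1}$, and the contradiction is extracted via Lemma \ref{lemma-finite-conjugate} and the splittings $N=A\rtimes(N/A)$, $M=C\rtimes(N/A)$ of Remark \ref{remark-splits}, together with P2, P3, P8. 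You instead rule out $xgx^{-1}=g^{-1}$ by proving directly that $g$ is not conjugate to $g^{-1}$: Lemma \ref{lemma-cyclically-reduced-conjugate} plus the invariance of the factor pattern of reduced decompositions reduces the question to $(cb)^v=\bigl(h(c^{-1}b)h^{-1}\bigr)^v$ with $h\in N$, and then the axis argument (the three consecutive vertices $\presup{b}M$, $S$, $M$ and P8) forces $h(c^{-1}b)h^{-1}=cb$, hence $h\in N\cap\presup{b}N=\{e\}$ and the contradiction $c=c^{-1}$. Your handling of the orientation-preserving case --- a translation along the axis differing from a power of $cb$ fixes three consecutive vertices, hence is trivial by P8 --- is the paper's injectivity-of-$\phi$ argument in only slightly different clothing. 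What your route buys is that it avoids Lemma \ref{lemma-finite-conjugate} and the semidirect-product structure coming from P7/Remark \ref{remark-splits}, relying only on P3, P8 and $c^2\neq e$; what the paper's buys is that it stays entirely within the geometry of the axis (translations versus vertex reflections of a straight path), avoiding the combinatorics of cyclic permutations of reduced words. The small steps you leave implicit (the axis of $w$ coincides with that of $w^v$ because both are straight paths; fixing $S$, $M$, $\presup{b}M$ places an element in $N\cap\presup{b}N$) are exactly the kind of bookkeeping the paper itself performs, so they are acceptable as stated.
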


\begin{proof}
  Since $c\in M\smallsetminus N$ and $b\in S\smallsetminus N$
  we see that $g$ is cyclically reduced of length $2v$ in
  $K=M*_NS$. Lemmas \ref{lemma-serre-24} and
  \ref{lemma-length-cyclically-reduced} imply that there exists a
  unique straight path $T\subseteq T_K$, stable under the action of
  $\langle g\rangle$; moreover $M$ and $S$ belong to $T$ and the
  action of $g$ restricted to $T$ is a translation of amplitude
  $l(g)=2v$.

  If $k\in N_K(\langle g\rangle)$ then $kgk^{-1}=g^\varepsilon$
  for some
  $\varepsilon\in\{-1,1\}$, hence $kgk^{-1}$ stabilizes the unique
  path $T$
  above and therefore $\presup{gk^{-1}}T=\presup{k^{-1}}T$,
  hence again by the uniqueness of $T$ as in Lemma
  \ref{lemma-serre-24}(iii) we have $\presup{k^{-1}}T=T$.
  We conclude that $N_K(\langle g\rangle)$ stabilizes $T$.

  Let $\phi:N_K(\langle g\rangle)\to\Aut(T)$ be the
  homomorphism obtained by restricting the automorphisms of $T_K$ to
  automorphisms of $T\subseteq T_K$. In this paragraph we prove that
  $\phi$ is one-to-one.
  We draw a part of $T$:
  \begin{center}
  \def\piccent{20}
  \begin{picture}(196,30)
    \put(23,\piccent){\circle{4}}
    \put(73,\piccent){\circle{4}}
    \put(123,\piccent){\circle{4}}
    \put(173,\piccent){\circle{4}}
    \put(25,\piccent){\line(1,0){46}}
    \put(75,\piccent){\line(1,0){46}}
    \put(125,\piccent){\line(1,0){46}}
    \put(23,\piccent){\put(0,-16){\makebox(0,0)[b]{$S$}}}
    \put(23,\piccent){\put(25,-12){\makebox(0,0)[b]{$N$}}}
    \put(73,\piccent){\put(0,-16){\makebox(0,0)[b]{$M$}}}
    \put(73,\piccent){\put(25,-12){\makebox(0,0)[b]{$\presup{c}N$}}}
    \put(123,\piccent){\put(0,-16){\makebox(0,0)[b]{$\presup{cb}S$}}}
    \put(123,\piccent){\put(25,-12){\makebox(0,0)[b]{$\presup{cb}N$}}}
    \put(173,\piccent){\put(0,-16){\makebox(0,0)[b]{$\presup{cb}M$}}}
  \end{picture}
  \end{center}
  Since $\ker\phi$ acts trivially on $T$ we have
  $\ker\phi\subseteq \presup{c}N\cap\presup{cb}N$. Property
  {\propbnb} implies that $N\cap\presup{b}N=\{e\}$, hence
  $\ker\phi\subseteq \presup{c}(N\cap\presup{b}N)=\{e\}$.

  Since $\langle cb\rangle\subseteq N_K(\langle g\rangle)$,
  the action of $N_K(\langle g\rangle)$ on the vertices of
  $T$ has two orbits: the $S$ conjugates and the $M$ conjugates,
  and therefore it is enough to prove that no element of
  $N_K(\langle g\rangle)$ acts on $T$ as a reflection at $S$. Suppose to the
  contrary that $x\in N_K(\langle g\rangle)$ is such an element.
  Then $x\in S$ since $x$ fixes $S$, and $x^2=e$ since $\ker\phi$ is
  trivial. Also $xcbx^{-1}=(cb)^{-1}$, hence $cbxcbx=e$. Since
  $cbx$ is torsion Lemma \ref{lemma-finite-conjugate} implies that
  it belongs to a conjugate of $S$ or $M$. Since
  $bx\in S$ we have $l(cbx)\leq 2$, hence $cbx\in S\cup M$. Since
  $c\in M\smallsetminus N$ we have $cbx\in M$, hence $bx\in S\cap M=N$.
  Remark {\propsplits} implies that
  $N=A\rtimes(N/A)$, hence $bx=a_0n$ for some $a_0\in A$ and
  $n\in N/A$. Since $e=(cbx)^2=(ca_0n)^2$ and $M=C\rtimes(N/A)$ we
  have $n^2=e$; hence, as $N=A\rtimes(N/A)$, also
  $(bx)^2=(a_0n)^2\in A$.

  If $(bx)^2=e$ then, since $x^2=e$ and $b^2=e$ (by Property
  {\propb2e}), we see that $b$ commutes with $bx\in N$.
  Since $c^2\neq e$ and $(cbx)^2=e$ we see that $bx\neq e$.
  This contradicts Property \propbnb.

  If $(bx)^2\neq e$ then since the order of $A$ is a prime $p$ we
  see that $(bx)^2$ generates $A$. However $b$ inverts $(bx)^2$
  and hence normalizes $A$, contradicting Property {\propbnn}.
\end{proof}

\begin{lemma}
\mylabel{lemma-trivial-intersection}
  For $k\in L$, if $E\neq \presup{k}E$ then
  $E\cap\presup{k}E=\{e\}$.
\end{lemma}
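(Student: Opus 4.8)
The plan is to work inside the Bass--Serre tree $T_L$ of the amalgam $L=E*_ZK$, in which $E$ and $\presup{k}E$ occur as vertices, and to reduce the statement to a single normalizer computation. Suppose, for contradiction, that $E\neq\presup{k}E$ yet some $x\in E\cap\presup{k}E$ is nontrivial. Since $E$ is abelian and $Z\subseteq E$, every edge of $T_L$ incident to the vertex $E$ has stabilizer exactly $Z$: by Remark \ref{remark-tree-consequences}(ii) the vertex group $E$ acts transitively on these edges, the base edge $\{E,K\}$ has stabilizer $Z$, and $Z$ is invariant under conjugation by the abelian group $E$; applying $k$, every edge incident to $\presup{k}E$ has stabilizer $\presup{k}Z$. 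As $x$ fixes the two vertices $E$ and $\presup{k}E$ it fixes the geodesic between them, which alternates $E$-conjugates and $K$-conjugates and so has even length $\geq 2$; in particular $x$ lies in the stabilizer of the first edge (namely $Z$) and of the last edge (namely $\presup{k}Z$). Hence $x\in Z\cap\presup{k}Z$, and writing $Z=\langle cb\rangle$ I get $\langle x\rangle=\langle(cb)^v\rangle$ for some $v\geq 1$, while $\presup{k}Z=\langle\presup{k}(cb)\rangle$ gives also $\langle x\rangle=\presup{k}\langle(cb)^w\rangle$ for some $w\geq 1$.

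The heart of the argument is the identity $N_L(\langle(cb)^v\rangle)=E$ for every $v\geq 1$, which I would obtain from Lemma \ref{lemma-normal-subgroup} applied to $L=E*_ZK$ with $H=Z$ and $H_0=\langle(cb)^v\rangle$. To check the hypothesis of that lemma I must show that, for $y\in E\cup K$, an inclusion $\presup{y}H_0\subseteq Z$ forces $\presup{y}H_0=H_0$. When $y\in E$ this is clear since $E$ is abelian. When $y\in K$: here $c\in M\smallsetminus N$ and $b\in S\smallsetminus N$ (the latter by Property {\propbnn}), so $cb$ is cyclically reduced of length $2$ in $K=M*_NS$, hence of infinite order, and $(cb)^v$ is cyclically reduced of length $2v$; if $\presup{y}(cb)^v=(cb)^j$ then $j\neq 0$, and since $\presup{y}(cb)^v$ and $(cb)^v$ are conjugate \emph{in $K$}, Lemma \ref{lemma-length-cyclically-reduced-conjugate} gives $|j|=v$, whence $\presup{y}\langle(cb)^v\rangle=\langle(cb)^v\rangle$ and $y\in N_K(\langle(cb)^v\rangle)=Z$ by Lemma \ref{lemma-normalizer}. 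Lemma \ref{lemma-normal-subgroup} then yields $N_L(\langle(cb)^v\rangle)=N_E(\langle(cb)^v\rangle)*_ZN_K(\langle(cb)^v\rangle)=E*_ZZ=E$, using that $E$ is abelian, that $N_K(\langle(cb)^v\rangle)=Z$, and that the amalgam of $E$ with $Z$ along $Z$ is just $E$.

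Finally, feeding the two presentations of $\langle x\rangle$ into this identity and using that $N_L(-)$ is equivariant under conjugation, I conclude $E=N_L(\langle(cb)^v\rangle)=N_L(\langle x\rangle)=N_L(\presup{k}\langle(cb)^w\rangle)=\presup{k}N_L(\langle(cb)^w\rangle)=\presup{k}E$, contradicting $E\neq\presup{k}E$; therefore $E\cap\presup{k}E=\{e\}$. The step I expect to be the main obstacle is the normalizer computation in the second paragraph --- in particular verifying the hypothesis of Lemma \ref{lemma-normal-subgroup}, where it is essential that the conjugating element be taken in $K$ so that Lemma \ref{lemma-length-cyclically-reduced-conjugate} applies to conjugacy inside $K$ rather than in $L$. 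The remaining bookkeeping with vertex stabilizers and the reduction to $Z\cap\presup{k}Z$ is routine.
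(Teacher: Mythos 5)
Your proof is correct, and it reaches the conclusion by a somewhat different route than the paper. The paper's proof also works in $T_L$, but it localizes the argument at an \emph{interior} vertex of the geodesic from $E$ to $\presup{k}E$: that geodesic contains a segment $E$ --- $K$ --- $\presup{r}E$ (up to conjugation) with $r\in K\smallsetminus\langle cb\rangle$, so up to conjugation $E\cap\presup{k}E\subseteq\langle cb\rangle\cap\presup{r}\langle cb\rangle$; a nontrivial element there gives $(cb)^s=r(cb)^tr^{-1}$ with $r\in K$, and then Lemma \ref{lemma-length-cyclically-reduced-conjugate} plus Lemma \ref{lemma-normalizer} force $r\in\langle cb\rangle$, an immediate contradiction. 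You instead pin a nontrivial $x\in E\cap\presup{k}E$ into the stabilizers of the two \emph{terminal} edges, getting $x\in Z\cap\presup{k}Z$, where the conjugator $k$ lies in $L$ rather than $K$; you compensate by proving the stronger intermediate identity $N_L(\langle(cb)^v\rangle)=E$ via Lemma \ref{lemma-normal-subgroup} (whose hypothesis you verify correctly, the key point being that the conjugating element there ranges only over $E\cup K$, so Lemmas \ref{lemma-length-cyclically-reduced-conjugate} and \ref{lemma-normalizer} apply), and then conclude $E=\presup{k}E$ by equivariance of normalizers. The trade-off: the paper's choice of an interior $K$-vertex keeps all conjugations inside $K$ and so needs only the two lemmas, making the proof shorter; your version costs the extra appeal to Lemma \ref{lemma-normal-subgroup} but produces an $L$-level analogue of Lemma \ref{lemma-normalizer} that is of some independent interest.
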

\begin{proof}
  The path that connects $E$ and $\presup{k}E$ must contain a segment
  conjugate to
  \begin{center}
  \def\piccent{20}
  \begin{picture}(146,28)
    \put(23,\piccent){\circle{4}}
    \put(73,\piccent){\circle{4}}
    \put(123,\piccent){\circle{4}}
    \put(25,\piccent){\line(1,0){46}}
    \put(75,\piccent){\line(1,0){46}}
    \put(23,\piccent){\put(0,-16){\makebox(0,0)[b]{$E$}}}
    \put(23,\piccent){\put(25,-14){\makebox(0,0)[b]{$\langle cb\rangle$}}}
    \put(73,\piccent){\put(0,-16){\makebox(0,0)[b]{$K$}}}
    \put(73,\piccent){\put(25,-14){\makebox(0,0)[b]{$\presup{r}\langle cb\rangle$}}}
    \put(123,\piccent){\put(0,-16){\makebox(0,0)[b]{$\presup{r}E$}}}
  \end{picture}
  \end{center}
  for some $r\in K\smallsetminus\langle cb\rangle$.
  Up to conjugation, we have
  $E\cap \presup{k}E \subseteq
  \langle cb \rangle \cap \presup{r}\langle cb \rangle$.
  If $\langle cb\rangle \cap \presup{r}\langle cb\rangle$ is
  nontrivial then we have $(cb)^s=r(cb)^tr^{-1}$ for some nonzero
  integers $s$ and $t$. Lemma
  \ref{lemma-length-cyclically-reduced-conjugate} applied to $K=M*_NS$
  implies that $|s|=|t|$, hence $r\in N_K(\langle(cb)^s\rangle)$.
  Lemma \ref{lemma-normalizer} implies that
  $N_K(\langle(cb)^s\rangle)=\langle cb\rangle$, which
  contradicts $r\notin\langle cb\rangle$.
\end{proof}

\begin{lemma}
\mylabel{lemma-A-normalizer}
  If $g\in L$ normalizes $A$ then $g\in K$.
\end{lemma}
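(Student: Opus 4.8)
The plan is to work with the tree $T_L$ associated to the amalgam $L = E *_Z K$ and use the fact that $A$ is a finite subgroup to pin down where its normalizer can live. First I would observe that $A = \langle a \rangle$ is a finite group contained in $S \subseteq K$, and that $K$ is a vertex of $T_L$ with stabilizer $K$ itself, while $E$ is the other type of vertex. The key point is that $A$ fixes the vertex $K$ of $T_L$ (since $A \subseteq K$), and I want to show it fixes no other vertex, so that $A$'s fixed-point set in $T_L$ is exactly $\{K\}$; then any $g$ normalizing $A$ must carry this fixed-point set to itself, forcing $\presup{g}K = K$, i.e. $g \in K$.

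To show $A$ fixes only the vertex $K$: suppose $A$ fixes some other vertex $V$. The vertices of $T_L$ are the conjugates of $E$ and of $K$ (by Remark \ref{remark-tree-consequences}(i) applied to $L = E *_Z K$). If $A$ fixes a conjugate $\presup{k}E$ with $\presup{k}E \neq E$, then since $A$ also fixes $K$ which is adjacent-type... more carefully: $A$ fixes both $K$ and $V$, hence fixes the whole geodesic between them, in particular the edge of $T_L$ incident to $K$ along that geodesic. The edge stabilizers of $T_L$ are conjugates of $Z = \langle cb \rangle$. So $A$ would be contained in a conjugate $\presup{k}Z = \presup{k}\langle cb \rangle$ for some $k \in K$ (the edge at $K$ leading out is $\presup{k}Z$ for $k \in K$, by Remark \ref{remark-tree-consequences}(iv)). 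But $\langle cb \rangle$ is torsion-free: indeed $g = (cb)^v$ is cyclically reduced of length $2v$ in $K = M *_N S$ for $v > 0$ (as shown in the proof of Lemma \ref{lemma-normalizer}), so $cb$ has infinite order by Lemma \ref{lemma-c-reduced}. Hence any conjugate of $\langle cb \rangle$ is torsion-free and cannot contain the nontrivial finite group $A$. This contradiction shows $A$ fixes no conjugate of $E$ other than... wait, this argument shows $A$ fixes no vertex at distance $\geq 1$ from $K$ at all, since the first edge out of $K$ has a torsion-free stabilizer that cannot contain $A$. So the fixed-point set of $A$ in $T_L$ is exactly $\{K\}$.

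Finally, if $g$ normalizes $A$, then $g$ maps the fixed-point set of $A$ to the fixed-point set of $\presup{g}A = A$, so $g$ fixes $K$, i.e. $\presup{g}K = K$, which by Remark \ref{remark-tree-consequences}(iii) means $g \in K$. I expect the main subtlety to be the clean identification that $A \not\subseteq \presup{k}\langle cb \rangle$ for any $k \in K$ — this rests entirely on $cb$ having infinite order, which follows from it being cyclically reduced in $M *_N S$ via Lemmas \ref{lemma-c-reduced} and the length computation already carried out in Lemma \ref{lemma-normalizer}; so there is nothing genuinely hard here, it is a matter of assembling the tree-theoretic observations in the right order.
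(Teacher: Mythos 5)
Your proof is correct and follows essentially the same route as the paper: both arguments show that the vertex $K$ is the unique fixed point of $A$ (equivalently of $a$) acting on $T_L$, and then observe that any element normalizing $A$ must preserve this one-point fixed set and hence stabilize $K$. The only cosmetic difference is how uniqueness is obtained: you rule out a second fixed vertex because the edge stabilizers at $K$ are conjugates of the torsion-free group $\langle cb\rangle$, whereas the paper invokes Lemma \ref{lemma-tree-fixed-points} together with the torsion-freeness of the vertex group $E$.
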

\begin{proof}
  Since $a\in K$ its action on $T_L$ fixes $K$. Since $a$ is
  torsion and $E$ is torsion-free Lemma
  \ref{lemma-tree-fixed-points} implies that $K$ is the unique
  fixed point of this action. Since $g$ normalizes $A$ we see that
  $\presup{gag^{-1}}K=K$, hence $\presup{ag^{-1}}K=\presup{g^{-1}}K$
  and therefore $\presup{g^{-1}}K=K$ by the uniqueness of the
  fixed point of $a$. The identity $\presup{g^{-1}}K=K$ implies
  $g\in K$.
\end{proof}

\begin{lemma}
\mylabel{lemma-E}
  The image of a homomorphism $f:E\to L$ is conjugate in
  $L$ to a subgroup of $E$.
\end{lemma}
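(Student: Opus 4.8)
The plan is to study the action of the abelian group $f(E)$ on the tree $T_L$ of the amalgam $L=E*_ZK$ and to show that it fixes a vertex of the form $\presup{g}E$; by Remark~\ref{remark-tree-consequences}(iii) this is exactly the assertion that $f(E)\subseteq\presup{g}E$. We may assume $f$ is nontrivial. The features of the construction I would rely on are that $E$ is torsion free and $\ell$-divisible for every prime $\ell\neq q$, while $q$ divides neither $|S|$ nor $|M|=p\,|N|$ --- so $q\neq p$ (as $p\mid|S|$), the group $K$ has no element of order $q$, and by Lemma~\ref{lemma-finite-conjugate} the finite subgroups of $K$ have bounded order (Theorem~\ref{thm-ering}, Remark~\ref{remark-E-S}) --- together with the fact that two distinct conjugates of $E$ meet trivially (Lemma~\ref{lemma-trivial-intersection}).

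The first step is to show that every $f(x)$ fixes a vertex of $T_L$. If some $f(x)\neq e$ did not, it would, by Lemma~\ref{lemma-c-reduced}, be conjugate to a cyclically reduced element $w$ with $l(w)\geq 2$, and by Lemmas~\ref{lemma-length-cyclically-reduced} and~\ref{lemma-serre-24} such a $w$, hence $f(x)$, would act on $T_L$ without fixed points, translating the straight path it stabilizes by the amplitude $l(w)$. Using $\ell$-divisibility I would write $f(x)=f(y_n)^{\ell^n}$ with $\ell^ny_n=x$; each $f(y_n)$ is then also fixed-point free, the straight path it stabilizes is stable under $f(x)$ and hence, by Lemma~\ref{lemma-serre-24}(iii), coincides with that of $f(x)$, and so $f(x)=f(y_n)^{\ell^n}$ translates it by $\ell^n$ times the amplitude of $f(y_n)$. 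Thus $\ell^n\mid l(w)$ for every $n$, which is absurd; so $f(x)$ is conjugate into a factor, $E$ or $K$, and in either case fixes a vertex.

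Next I would show $f(E)$ fixes a vertex, and then that the vertex may be taken of type $E$. Since $f(E)$ is abelian and acts with all elements elliptic, it fixes a vertex or an end of $T_L$ (a standard fact about actions on trees, cf.\ \cite{serre}). If it fixed an end $\xi$ but no vertex, take a ray $v_0,v_1,\dots$ converging to $\xi$; as $T_L$ is bipartite with every edge joining a conjugate of $E$ to a conjugate of $K$ (Remark~\ref{remark-tree-consequences}(iii)), infinitely many $v_i$ are conjugates of $E$, and each $g\in f(E)$ --- being elliptic and fixing $\xi$ --- fixes a tail of the ray, hence two distinct conjugates of $E$, forcing $g=e$ by Lemma~\ref{lemma-trivial-intersection}; this makes $f$ trivial, a contradiction. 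So $f(E)$ fixes a vertex; its fixed-point set is a subtree, and if that subtree has an edge it contains a conjugate of $E$ and we are done. Otherwise the fixed-point set is a single vertex; if that vertex is a conjugate of $E$ we are done, and if it is a conjugate $\presup{g}K$ then $B:=\presup{g^{-1}}f(E)\subseteq K$ is a nontrivial abelian quotient of $E$, hence $\ell$-divisible for all $\ell\neq q$. Rerunning the length argument of the second paragraph inside $K=M*_NS$ shows no element of $B$ is conjugate to a cyclically reduced element, so by Lemma~\ref{lemma-c-reduced} every element of $B$ has finite order; for $\ell\neq q$ the $\ell$-primary part of the torsion group $B$ is then divisible, hence trivial --- else it would contain a copy of the Pr\"ufer group $\mathbb{Z}(\ell^\infty)$, contradicting boundedness of the orders of finite subgroups of $K$. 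Thus $B$ is a $q$-group, so $B=\{e\}$ as $K$ has no element of order $q$, and again $f$ is trivial, a contradiction. Therefore $f(E)\subseteq\presup{g}E$ for some $g\in L$.

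I expect the main obstacle to be the third paragraph: the passage from ``$f(E)$ fixes a vertex'' to ``$f(E)$ fixes a vertex of type $E$''. Ruling out a fixed end is precisely where the trivial-intersection property of Lemma~\ref{lemma-trivial-intersection} is essential, and ruling out a fixed conjugate of $K$ is where the arithmetic of the construction --- the absence of $q$-torsion and of Pr\"ufer subgroups in $K$ --- is used; the first paragraph's observation that divisibility forbids hyperbolic behaviour is routine. A point that should be pinned down, or cited with care, is the standard fact invoked at the start of the third paragraph, that an abelian group acting on a tree with every element elliptic fixes a vertex or an end.
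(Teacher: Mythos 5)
Your proof is correct, and its overall skeleton (ellipticity of image elements via divisibility; then forcing a fixed vertex of type $E$; then excluding an image inside a conjugate of $K$) matches the paper's, but the middle step is carried out by a genuinely different device. The paper never invokes the dichotomy ``an abelian group of elliptic tree automorphisms fixes a vertex or an end'': it fixes one $x\in E\smallsetminus\ker f$, notes that the fixed subtree $T^x$ of $f(x)$ contains at most one vertex of type $E$ (by Lemma \ref{lemma-trivial-intersection}), and uses commutativity only to say that $f(E)$ preserves $T^x$, hence fixes either that unique type-$E$ vertex or, if $T^x$ has no type-$E$ vertex, the single type-$K$ vertex that $T^x$ then consists of. This is precisely what lets the paper sidestep the ``standard fact'' you flag at the end: in Serre that statement carries a countability hypothesis, while $E$ here has arbitrary cardinality, so you would have to supply the argument yourself (for an abelian group with all elements elliptic it does go through, via pairwise intersection of fixed trees, the Helly property for finitely generated subgroups, and a directed-family limit to an end, and your ruling out of a fixed end via Lemma \ref{lemma-trivial-intersection} is fine) --- so this is a repairable point of care rather than a fatal gap. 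In the final case your treatment is more detailed than the paper's: the paper simply observes that $E$ is $|S|$-divisible while no nontrivial element of $K$ is divisible by all powers of $|S|$, whereas your torsion and primary-decomposition argument (ellipticity in $T_K$, bounded orders of finite subgroups, absence of $q$-torsion in $K$) in effect proves that claim, elaborating a step the paper leaves terse.
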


\begin{proof}
  We need to prove that the action of $E$ on $T_L$
  induced by $f$ has a fixed vertex that corresponds to a conjugate
  of $E$. Suppose that $f(E)$ is nontrivial.
  Let $x\in E\smallsetminus \ker f$. If $x$ acts on $T_L$
  without fixed points then by Lemma \ref{lemma-serre-24}
  there exists a straight path $T$ in $T_L$,
  stable under the action of $x$, on which it induces a translation
  by $n>0$ vertices.
  Theorem \ref{thm-ering}(4) implies that $E$ is divisible by many
  primes, hence
  there exist $y\in E$ and an integer $m>n$ such that $y^m=x$.
  The action of $y$ on $T_L$ also has no fixed points and by Lemma
  \ref{lemma-serre-24} again, there exists a straight path $T_y$
  stable under the action of $y$, which $y$ translates by $n_y$
  vertices.
  Since $T_y$ is stable under $x$ and $x^{-1}$, Lemma
  \ref{lemma-serre-24}(iii) implies that $T\subseteq T_y$, so
  since they are both straight paths, $T=T_y$. Now $n_y$ has to be a
  fractional quantity ${n\over m}$, so we obtain a contradiction
  and therefore the set $T^x$ of points fixed by $x$ is nonempty.

  Lemma \ref{lemma-trivial-intersection} implies that $T^x$ may contain
  at most one vertex of the form $\presup{k}E$ for some $k$.
  If $T^x$ does contain a $\presup{k}E$
  then since $E$ is abelian we deduce that $T^x$ is stable
  under the action of $E$ on $T_L$, hence
  $\presup{k}E$ is fixed by this action, that is,
  $f(E)\subseteq\presup{k}E$ as required.
  If $T^x$ does not contain a
  conjugate of $E$ then it consists of precisely one conjugate of
  $K$ and, as above, we have $f(E)\subseteq\presup{k}K$.

  Let $m$ be the order of $S$. Since $E$ was chosen in Remark
  \ref{remark-E-S} to be $m$-divisible and no nontrivial element of $K$
  is divisible by all powers of $m$,
  we obtain a contradiction
  with the assumption that $x\notin\ker f$.
\end{proof}

\begin{lemma}
\mylabel{lemma-f2e}
  If $f:L\to L$ is a homomorphism such that $f(b)=e$ then $f(L)=f(S)$
  and $f$ is uniquely determined by its values on $S$.
\end{lemma}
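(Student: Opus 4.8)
\smallskip
\noindent\textit{Proof proposal.} The plan is to reduce everything to the single assertion $f(E)=\{e\}$. Recall from the construction that $L=E\ast_Z K$ with $Z=\langle cb\rangle$ identified with $\langle 1\rangle\le E$, and that (by Remark \ref{remark-amalgam-generation} applied twice, together with $M=\langle N\cup\{c\}\rangle$) the group $L$ is generated by $S\cup E$: indeed $c=(cb)b^{-1}$ with $cb\in E$ and $b\in S$, so $c\in\langle S\cup E\rangle$. Hence once we know $f(E)=\{e\}$, every element of $L$ is carried by $f$ to a value lying in $\langle f(S)\rangle=f(S)$ which depends only on $f|_S$; that gives simultaneously $f(L)=f(S)$ and the fact that $f$ is determined by its restriction to $S$.

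To prove $f(E)=\{e\}$ I would first feed the restriction $f|_E\colon E\to L$ into Lemma \ref{lemma-E}: its image is conjugate into $E$, so there is $k\in L$ with $\presup{k}f(E)\subseteq E$, whence the map $g\colon E\to E$ defined by $g(x)=\presup{k}f(x)$ is an \emph{additive endomorphism of $E$}. Next I would evaluate $g$ at the ring identity $1\in E$: since $1$ is identified with $cb$ and $f(b)=e$ by hypothesis, $g(1)=\presup{k}f(cb)=\presup{k}f(c)$. Now $c$ has order $p^{2}$ in $L$ (it lies in the subgroup $C\le M$, and $M$, $K$ embed in the successive amalgams), so $f(c)$, and hence $g(1)$, is a torsion element; since $E$ is torsion free by Theorem \ref{thm-ering}(2), we get $g(1)=0$. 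Finally, because $E$ is an $E$-ring, every additive endomorphism of $E$ is multiplication by its value at $1$ (equivalently, by Theorem \ref{thm-ering}(5) a nonzero endomorphism would be injective, which $g$ is not); thus $g=0$, that is $\presup{k}f(x)=e$ for every $x\in E$, and so $f(E)=\{e\}$.

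The one genuinely nontrivial step is the last leap, from ``$f$ kills the cyclic subgroup $Z=\langle cb\rangle$'' to ``$f$ kills all of $E$'', and it is precisely there that the $E$-ring hypothesis (via torsion-freeness and the identification $\hom_{\mathbb Z}(E,E)\cong E$) does the real work; the rest is bookkeeping with the amalgam structure and the generation of $L$ by $S\cup E$. I would also note that the hypothesis $f(b)=e$ is indispensable here: without it $f(cb)$ need not be torsion, $g$ could be a nonzero (hence injective) multiplication, and $f|_E$ could be an embedding rather than the trivial map.
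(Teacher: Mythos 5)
Your proposal is correct and follows essentially the same route as the paper: use $f(cb)=f(c)$ torsion together with Lemma \ref{lemma-E} and the torsion-freeness of $E$ to get $f(1)=e$, then invoke Theorem \ref{thm-ering}(5) (trivial-or-injective endomorphisms) to kill all of $E$, and conclude via the generation of $L$ by $S\cup E$. Your explicit introduction of the conjugated endomorphism $g(x)=\presup{k}f(x)$ merely spells out a step the paper leaves implicit.
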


\begin{proof}
  We see that if $1$ is the ring identity of $E$ then
  $f(1)=f(cb)=f(c)$, hence $f(1)$ is torsion.
  Since $E$ is torsion free, Lemma \ref{lemma-E} implies that
  $f(1)=e$ and therefore
  $f(E)=\{e\}$ since any endomorphism of $E$
  is either trivial or injective (Theorem \ref{thm-ering}(5)).
  Our claim is proved since $L$ is generated by $S$ and $E$.
\end{proof}

\begin{lemma}
\mylabel{lemma-conj}
  For any homomorphism
  $f:S\to L$ there exists an inner automorphism $c_g$ of $L$ such
  that $c_gf(S)\subseteq S$, that is, for some $f_S:S\to S$,
  the following diagram commutes:
  $$
  \xymatrix{
    S \ar[r]^f \ar[d]^{f_S} & L \ar[d]^{c_g} \\
    S \ar[r]^\eta & L
  }
  $$
  where $\eta:S\to L$ is the inclusion.
\end{lemma}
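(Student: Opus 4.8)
The plan is to reduce, via Lemma~\ref{lemma-finite-conjugate}, to the case where $f(S)$ lies inside the vertex group $M$ of the tree for $K$, and then to push it into $N\subseteq S$ using Properties~{\prophomzp} and {\propna} together with the Schur--Zassenhaus theorem.

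First, $f(S)$ is finite, and the statement is trivial when $f(S)=\{e\}$, so assume $f(S)$ is nontrivial; then it contains an element of finite order greater than $1$. Lemma~\ref{lemma-finite-conjugate} applied to $L=E*_ZK$ shows that $f(S)$ is conjugate in $L$ to a subgroup of $E$ or of $K$; since $E$ is torsion free (Theorem~\ref{thm-ering}(2)) the first alternative is impossible, so after composing $f$ with an inner automorphism of $L$ we may assume $f(S)\subseteq K$. Applying Lemma~\ref{lemma-finite-conjugate} once more, this time to $K=M*_NS$, and composing with another inner automorphism, we may further assume that $f(S)$ is contained in $M$ or in $S$. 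If $f(S)\subseteq S$ we are done, so the remaining, and main, case is $f(S)\subseteq M$.

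For this case I would argue as follows. Recall that $M=C\rtimes(N/A)$ and $N=A\rtimes(N/A)$, where $A=\langle c^p\rangle$ has index $p$ in the cyclic group $C$ of order $p^2$. By Property~{\propna} the prime $p$ does not divide $|N/A|$, so $C$ is a normal Sylow $p$-subgroup of $M$; hence $A$, being the unique subgroup of order $p$ of $C$, is normal in $M$. Property~{\prophomzp} implies that $S$, and therefore its quotient $f(S)\cong S/\ker f$, has no element of order $p^2$ (the order of an element of a quotient divides the order of a lift); since $C$ is cyclic of order $p^2$, this forces $f(S)\cap C\subseteq A$. Now pass to $\overline{M}=M/A$. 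Because $f(S)\cap C\subseteq A$, the image $\overline{f(S)}$ meets the normal Sylow $p$-subgroup $C/A$ of $\overline{M}$ trivially, so $\overline{f(S)}$ is a $p'$-subgroup. The image $N/A$ of $N$ in $\overline{M}$ has order $|N/A|$, which is the $p'$-part of $|\overline{M}|=p\cdot|N/A|$, so it is a Hall $p'$-subgroup of $\overline{M}$, and by the Schur--Zassenhaus theorem $\overline{f(S)}$ is conjugate in $\overline{M}$ into $N/A$. Lifting the conjugating element to $M$ and using that the preimage of $N/A$ in $M$ is exactly $N$, we obtain an element of $M$ conjugating $f(S)$ into $N\subseteq S$.

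Composing all the inner automorphisms invoked above yields an inner automorphism $c_g$ of $L$ with $c_gf(S)\subseteq S$, and taking $f_S$ to be $c_gf$ with codomain cut down to $S$ makes the square commute. The genuine obstacle is the case $f(S)\subseteq M$: Properties~{\prophomzp} and {\propna} are exactly what prevents $f(S)$ from using the ``extra'' order-$p^2$ part of $M$, and only after that does Schur--Zassenhaus move $f(S)$ back into $N$; everything else is a routine double application of Lemma~\ref{lemma-finite-conjugate} and the torsion-freeness of $E$.
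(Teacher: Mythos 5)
Your proof is correct and takes essentially the same route as the paper's: two applications of Lemma \ref{lemma-finite-conjugate} (first in $L=E*_ZK$, using torsion-freeness of $E$, then in $K=M*_NS$) reduce to the case $f(S)\subseteq M$, after which P6 forces the image to be a $p'$-group modulo a normal subgroup containing $A$, and a Hall/Schur--Zassenhaus conjugacy argument moves it into $N\subseteq S$. The only (immaterial) difference is that you pass to $M/A$ and cite Schur--Zassenhaus, while the paper passes to $M/C_N(A)$ and cites Hall's theorem.
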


\begin{proof}
  Since $S$ is finite Lemma \ref{lemma-finite-conjugate} tells us that
  its action on $T_L$
  has a fixed point. Hence, as there are no nontrivial homomorphisms $S\to E$,
  we see that $f(S)$ is conjugate to
  $\presup{k}f(S)\subseteq K$.
  The group $K$ again acts on a tree $T_K$,
  hence by Lemma \ref{lemma-finite-conjugate} again
  we see that $\presup{k}f(S)$ is conjugate to a subgroup
  of $S$ or $M$.

  It is enough to show that if $f(S)$ is isomorphic to
  $G\subseteq M$ then $G$ is conjugate in $M$ to
  a subgroup of $N$. Let $H$ be the centralizer of $A$ in $N$.
  Then
  $$
  M/H\cong(C/A)\rtimes(N/H)
  $$
  Now $N/H$ is isomorphic to a subgroup of $\mathbb{Z}/(p-1)$.
  Notice that if $x\in M$ is an element of order $p$, then $x\in
  H$ (actually, by Property {\propna}, $x\in A$, but we do not
  need this). Since by Property {\prophomzp}, $G$ contains no
  elements of order $p^2$, the image of $G$ in $M/H$ is a
  $p'$-group. Hence, by Hall's theorem, this image is conjugate
  in $M/H$ to a subgroup of $N/H$, and it follows that $G$ is
  conjugate in $M$ to a subgroup of $N$.
\end{proof}

\section{Proof of the main theorem}
\label{section-proof}

\begin{proposition}
\mylabel{proposition-exist}
  For any homomorphism $f:S\to L$ there exists a homomorphism
  $f':L\to L$
  which closes the diagram
  $$
  \xymatrix{
    S \ar[dr]^f \ar[rr]_\eta && L \ar@{-->}[dl]^{f'} \\
    & L
  }
  $$
  where $\eta$ is the inclusion.
\end{proposition}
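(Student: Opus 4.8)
The plan is to reduce, via Lemma~\ref{lemma-conj}, to the problem of extending a self-map of $S$ over $L$. Given $f\colon S\to L$, Lemma~\ref{lemma-conj} produces an inner automorphism $c_g$ of $L$ and a homomorphism $f_S\colon S\to S$ with $c_g\circ f=\eta\circ f_S$. It therefore suffices to build a homomorphism $h\colon L\to L$ with $h\circ\eta=\eta\circ f_S$ (equivalently $h|_S=f_S$): then $f':=c_g^{-1}\circ h$ is a homomorphism $L\to L$ with
$$
  f'\circ\eta=c_g^{-1}\circ h\circ\eta=c_g^{-1}\circ\eta\circ f_S=c_g^{-1}\circ c_g\circ f=f,
$$
as required. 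Property~{\propautker} now splits the construction of $h$ into two cases, according to whether $f_S$ is an automorphism of $S$ or has $a$ and $b$ in its kernel.

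If $f_S$ is an automorphism of $S$, then, since $S$ has no outer automorphisms, $f_S$ is conjugation by some $s\in S$; the inner automorphism $h:=c_s$ of $L$ restricts on $S$ to $f_S$, and we are done. (In this case $f'$ is itself an inner automorphism of $L$.)

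The substantive case is $a,b\in\ker f_S$. The point is that $f_S$ already annihilates everything adjoined to $S$ in forming $K$ and then $L$, so the extension may be taken to collapse those pieces. Recall from Section~\ref{section-construction} that $M=C\rtimes(N/A)$ with $A=\langle a\rangle$ identified with a subgroup of $C$, that $N=A\rtimes(N/A)$ (Remark~\ref{remark-splits}) is viewed as a subgroup of $M$, and that under the identification $M/C\cong N/A$ the quotient map $M\to M/C$ restricts on $N$ to the quotient $N\to N/A$. Since $f_S(a)=e$, the restriction $f_S|_N$ kills $A$ and hence factors through $N\to N/A$, say via $\bar f_S\colon N/A\to S$. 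The composite $M\to M/C\cong N/A$ followed by $\bar f_S$ (landing in $S\subseteq L$) is a homomorphism $h_M\colon M\to L$, and $h_M|_N=f_S|_N$ by the compatibility just recalled. By the universal property of the amalgam $K=M\ast_N S$ these glue to a homomorphism $h_K\colon K\to L$ with $h_K|_M=h_M$ and $h_K|_S=f_S$. Since $c\in C$ we get $h_K(c)=h_M(c)=e$, and $f_S(b)=e$, so $h_K(cb)=e$; thus $h_K$ is trivial on $Z=\langle cb\rangle$. The trivial homomorphism $E\to L$ agrees with $h_K$ on $Z$, so by the universal property of $L=E\ast_Z K$ they glue to the desired homomorphism $h\colon L\to L$, which satisfies $h|_K=h_K$ (in particular $h|_S=f_S$) and is trivial on $E$.

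I expect the only real work to lie in the second case: one must verify that sending $C$ to the identity is consistent with $f_S$ on $N\subseteq M$ — which is exactly where the compatible semidirect decompositions $M=C\rtimes(N/A)$ and $N=A\rtimes(N/A)$ are used — and that the resulting map $h_K$ kills $Z$, so that it extends over $E$ by the trivial map. The dichotomy of Property~{\propautker} is what makes the two-case strategy possible at all, while the hypothesis that $S$ has no outer automorphisms disposes of the automorphism case.
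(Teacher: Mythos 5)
Your proposal is correct and follows essentially the same route as the paper's proof: reduce via Lemma~\ref{lemma-conj} to a self-map $f_S$ of $S$, use the absence of outer automorphisms to handle the automorphism case by an inner automorphism of $L$, and otherwise use Property P5 to kill $A$, extend over $M$ through $M/C\cong N/A$, then over $K=M*_NS$ by the amalgam's universal property, and finally over $E$ by the trivial map since $cb\mapsto e$. The only difference is cosmetic: you carry the inner automorphism $c_g$ explicitly and correct by $c_g^{-1}$ at the end, where the paper simply notes it suffices to prove the claim for $f$ composed with an automorphism of $L$.
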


\begin{proof}
  It is enough to prove our claim for $f$ composed with some
  automorphism of $L$; hence
  by Lemma \ref{lemma-conj} we may assume that
  $f=\eta{f_S}$, so it is enough to close the following diagram:
  $$
  \xymatrix{
    S \ar[d]^{{f_S}} \ar[r]^{\eta} & L  \ar@{-->}[d]^{{f'}}\\
    S \ar[r]^\eta & L
  }
  $$
  If ${f_S}$ is an automorphism then it is an inner
  automorphism, hence we define $f'$ as the inner
  automorphism of $L$ determined by the same element.
  Otherwise by Property {\propautker} we have
  ${f_S}(a)=e=f_S(b)$, hence $f_S(A)=\{e\}$.
  Since, by definition, $N/A=M/C$ we see that
  ${f_S}$ restricted to $N$
  extends to $f':M\to L$ so that $f'(c)=e$ and we obtain an
  extension $f':K=M*_NS\to L$. Since $f'(cb)=e$
  we extend it further to $E$ by defining $f'(E)=\{e\}$.
\end{proof}

\begin{lemma}
\mylabel{lemma-unique}
  If a homomorphism $f:L\to L$ is the identity on $S$ then $f$ is
  the identity on $L$.
\end{lemma}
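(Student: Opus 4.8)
The plan is to show that $f$ restricts to the identity on $E$; since $L$ is generated by $S\cup E$ and $f$ is already the identity on $S$, this gives $f=\mathrm{id}_{L}$. Because the ring identity $1\in E$ generates $Z=\langle cb\rangle$ and $f(b)=b$, it suffices to prove that $f(c)=c$, whereupon $f(cb)=cb$ and the rest follows.

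First I would pin $f(c)$ down up to a $p$-fold ambiguity. Since $c^{p}$ lies in $A\subseteq S$ we have $f(c)^{p}=f(c^{p})=c^{p}$, so $f(c)$ has order $p^{2}$ and commutes with $c^{p}$; thus $f(c)$ centralizes $A=\langle c^{p}\rangle$ and lies in $K$ by Lemma~\ref{lemma-A-normalizer}. Now $\langle f(c)\rangle$ is a cyclic subgroup of order $p^{2}$ of $K=M*_{N}S$, hence conjugate in $K$ into $M$ or into $S$ by Lemma~\ref{lemma-finite-conjugate}; Property~\prophomzp{} rules out $S$, and by Property~\propna{} the subgroup $C$ is the unique Sylow $p$-subgroup of $M$ and hence its only subgroup of order $p^{2}$, so $\langle f(c)\rangle=\presup{k^{-1}}C$ for some $k\in K$. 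As $c^{p}\in\presup{k^{-1}}C$ forces $A=\presup{k^{-1}}A$, we get $k\in N_{K}(A)$; and since Property~\propna{} makes $A$ the unique subgroup of order $p$ in $N$, while $A$ is normal in $M$, Lemma~\ref{lemma-normal-subgroup} (with $H_{0}=A$) yields $N_{K}(A)=N_{M}(A)*_{N}N_{S}(A)=M*_{N}N=M$. Hence $k\in M$, so $\presup{k^{-1}}C=C$ and $f(c)=c^{i}$; comparing $p$-th powers gives $i\equiv 1\pmod p$.

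The crux is to upgrade this to $i\equiv 1\pmod{p^{2}}$. Assume not, and set $\alpha=c^{i-1}=(c^{p})^{(i-1)/p}\in A$, so $\alpha\neq e$ and $f(cb)=c^{i}b=\alpha cb$. Since $\alpha c\in M\smallsetminus N$ and $b\in S\smallsetminus N$, the element $\alpha cb$ is cyclically reduced of length $2$ in $M*_{N}S$ and, in its two-letter factorization, begins in the factor $M$. By Lemma~\ref{lemma-E} the element $f(1)=\alpha cb$ lies in a conjugate of $E$; combining this with $\alpha cb\in K$ and a short geodesic argument in $T_{L}$ (the $E$-type neighbour of $K$ on the geodesic toward that conjugate of $E$ is $\presup{k_{0}}E$ with $k_{0}\in K$, and $\presup{k_{0}}E\cap K=\presup{k_{0}}Z$) shows that $\alpha cb$ is conjugate in $K$ to an element of $Z=\langle cb\rangle$. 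By Lemma~\ref{lemma-length-cyclically-reduced-conjugate} such a conjugacy preserves length, so $\alpha cb$ is conjugate in $K$ to $cb$ or to $(cb)^{-1}=bc^{-1}$. Lemma~\ref{lemma-cyclically-reduced-conjugate} then writes $\alpha cb$ as a cyclic permutation of the relevant two-letter word, conjugated by some $n\in N$; since the factor in which a length-$2$ element begins is determined by its unique reduced decomposition (Lemma~\ref{lemma-reduced-decomposition}) and $\alpha cb$ begins in $M$, the only options are $\alpha cb=\presup{n}(cb)$ and $\alpha cb=\presup{n}(c^{-1}b)$. Equating the two length-$2$ factorizations and using $b^{2}=e$ (Property~\propb2e) produces, in both cases, the relation $\presup{b}n\in N$, i.e. $n\in N\cap\presup{b}N$, whence $n=e$ by Property~\propbnb{} (cf. Remark~\propxxa). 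But then $\alpha cb$ equals $cb$, forcing $\alpha=e$ (a contradiction), or equals $c^{-1}b$, forcing $c^{-2}=\alpha\in A=\langle c^{p}\rangle$, which is impossible since $p>2$. Therefore $i\equiv 1\pmod{p^{2}}$, that is, $f(c)=c$ and $f(cb)=cb$.

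Finally, $1=f(1)\in f(E)\cap E$ with $1\neq e$, and $f(E)$ lies in a conjugate of $E$ by Lemma~\ref{lemma-E}; by Lemma~\ref{lemma-trivial-intersection} that conjugate must be $E$ itself, so $f(E)\subseteq E$. Then $f|_{E}$ is an additive endomorphism of the $E$-ring $E$ fixing $1$, hence is multiplication by $1$, i.e. the identity. Since $L$ is generated by $S\cup E$, this gives $f=\mathrm{id}_{L}$. I expect the only genuine obstacle to be the third paragraph: the normal-form computations in $M*_{N}S$ needed to make Property~\propbnb{} bite; the remaining steps are a routine assembly of the structural lemmas proved above.
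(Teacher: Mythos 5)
Your proof is correct and follows essentially the same route as the paper: pin $f(c)$ into $C$ via Lemmas \ref{lemma-A-normalizer} and \ref{lemma-normal-subgroup}, use Lemma \ref{lemma-E} together with membership of $f(cb)$ in $K$ to see that $f(cb)$ is $K$-conjugate into $Z=\langle cb\rangle$, then apply Lemmas \ref{lemma-length-cyclically-reduced-conjugate}, \ref{lemma-cyclically-reduced-conjugate} and Property \propbnb{} to force $f(c)=c$, and finish on $E$ with Lemmas \ref{lemma-E}, \ref{lemma-trivial-intersection} and the $E$-ring property. The only local deviations are that you replace the paper's retraction $\pi:E*_ZK\to E/Z$ plus Lemma \ref{lemma-conjugate-elements} by a direct geodesic/edge-stabilizer argument in $T_L$ (equally valid), and you conclude $f|_E=\mathrm{id}$ via ``endomorphisms are multiplication by $f(1)$'' instead of the paper's $f-\mathrm{id}$ kernel argument via Theorem \ref{thm-ering}(5); both variants are sound.
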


\begin{proof}
  Since $f$ is the identity on $A\subseteq S$
  the kernel of $f$ must trivially intersect $C$ so that
  the order of $f(c)$ is $p^2$. Since no element of $E$ or
  $S$ has order $p^2$, Lemma \ref{lemma-finite-conjugate}
  implies that $f(c)$ is conjugate in $L$ to a
  generator of $C$. We have
  $$
  f(c)=gc^mg^{-1}
  $$
  for some $g\in L$ and $m$ not divisible by $p$.
  Since $a=c^p$ we see that
  $a = f(a) = ga^mg^{-1}$, that is, $g$ normalizes
  $\langle a\rangle$,
  hence by Lemma \ref{lemma-A-normalizer} $g\in K=M*_NS$.

  Since $A$ is the unique subgroup of $N$ of order $p$, Lemma
  \ref{lemma-normal-subgroup} implies that $g\in
  N_M(A)*_NN_S(A)=M*_NN=M$, hence $g$ normalizes $C$. Possibly
  changing the value of $m$, we obtain
  $$
    f(c)=c^m
  $$

  Since $cb\in E$, Lemma \ref{lemma-E} implies that
  $f(cb)\in \presup{k}E$ for some $k\in L$.
  Let $\pi:E*_Z K\to E/Z$ be the amalgamation of the projection
  $E\to E/Z$ and a homomorphism which sends $K$ to $e$.
  Since $c$ and $b$ are in $K$ we see that
  $$
    f(cb) \in \presup{k}E\cap\ker\pi
  $$
  but the kernel of $\pi$ restricted to $\presup{k}E$ is
  $\presup{k}\langle cb \rangle$, hence
  $$
    c^mb = f(cb) = k(cb)^nk^{-1}
  $$
  for some $n \in \mathbb{Z}$ and $k\in L$.
  Since $c^mb$ and $(cb)^n$ belong to $K$,
  Lemma \ref{lemma-conjugate-elements} implies that we may choose
  $k$ to be in $K$.

  Lemma \ref{lemma-cyclically-reduced-conjugate}, applied to
  $c^mb=k(cb)^nk^{-1}$, implies that
  $n\in\{-1,1\}$, and since $b^{-1}=b$, we have
  $$
    c^mb = kc^nbk^{-1}
  $$
  where $k\in N$. Therefore
  $$
    c^m = (kc^nk^{-1})(kbk^{-1}b^{-1})
  $$
  Since $c^m$ and $kc^nk^{-1}$ belong to $C$ we have
  $kbk^{-1}b^{-1}\in C\cap S=A$, and therefore Remark {\propxxa}
  implies $k=e$, so $m=n$, that is, $f(c)=c$ or
  $f(c)=c^{-1}$. Since the latter implies $a=f(a)=a^{-1}$, which is
  impossible by Property {\propaorder}, we have
  $$
    f(c)=c
  $$
  Since $K$ is generated by $S$ and $c$ we see that $f$ is an
  identity on $K$.

  By Lemma \ref{lemma-E} we know that $f$ maps $E$ to
  $\presup{k}E$ for some $k \in L$. Since $cb\in K$
  we know that $f(cb)=cb$, hence $cb\in E\cap\presup{k}E$.
  Thus $E=\presup{k}E$ by Lemma \ref{lemma-trivial-intersection}.
  The kernel of $f-\mbox{id}$ restricted to $E$
  contains $cb$, hence by Theorem \ref{thm-ering}(5) we
  see that $f$ is the identity on $E$.
\end{proof}

\begin{proposition}
\mylabel{proposition-unique}
  If $f,g:L\to L$ are two homomorphisms that coincide on $S$
  then they are equal.
\end{proposition}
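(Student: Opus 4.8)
The plan is to reduce the statement to Lemmas \ref{lemma-conj}, \ref{lemma-unique} and \ref{lemma-f2e}, using Property {\propautker} and the standing assumption that $S$ has no outer automorphisms. Write $h=f\circ\eta=g\circ\eta\colon S\to L$ for the common restriction. By Lemma \ref{lemma-conj} there are an element $k\in L$ and a homomorphism $f_S\colon S\to S$ with $c_k\circ h=\eta\circ f_S$, where $c_k$ denotes conjugation by $k$; this single normalization applies simultaneously to $f$ and $g$ since it depends only on $h$. I would then split into two cases according to whether $f_S$ is an automorphism of $S$.

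If $f_S$ is an automorphism, then, $S$ having no outer automorphisms, it is conjugation by some $s\in S$, so $\eta\circ f_S=c_s\circ\eta$ with $c_s$ the conjugation by $s$ regarded as an element of $L$. Hence both $c_s^{-1}c_k\circ f$ and $c_s^{-1}c_k\circ g$ restrict to $\eta$ on $S$, and Lemma \ref{lemma-unique} forces each of them to be $\mathrm{id}_L$; therefore $f$ and $g$ are one and the same inner automorphism $c_k^{-1}c_s$ of $L$, so $f=g$. If $f_S$ is not an automorphism, Property {\propautker} gives $f_S(b)=e$, whence $c_k(f(b))=c_k(g(b))=e$, and since $c_k$ is injective $f(b)=g(b)=e$. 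Lemma \ref{lemma-f2e} then says each of $f$ and $g$ is determined by its restriction to $S$, and these restrictions coincide, so again $f=g$.

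The argument involves essentially no computation beyond unwinding definitions; the only point needing a little care is translating ``$f_S$ is conjugation by $s$ inside $S$'' into the identity $\eta\circ f_S=c_s\circ\eta$ between maps into $L$, so that Lemma \ref{lemma-unique} applies on the nose. I expect the mild case split to be the main obstacle, and it is dispatched by the two lemmas just cited together with Property {\propautker}.
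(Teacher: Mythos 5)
Your proof is correct and follows essentially the same route as the paper: normalize via Lemma \ref{lemma-conj}, split according to whether the induced map $f_S$ is an automorphism of $S$ (equivalently, whether $\ker f$ meets $S$ trivially), and conclude with Lemma \ref{lemma-unique} in the automorphism case (using the absence of outer automorphisms) and with Property P5 plus Lemma \ref{lemma-f2e} otherwise. Your explicit translation of ``$f_S$ is conjugation by $s$ in $S$'' into $\eta\circ f_S=c_s\circ\eta$ just spells out a step the paper leaves implicit.
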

\begin{proof}
  If the kernel of $f$ intersects $S$ nontrivially then by Lemma
  \ref{lemma-conj} and Property {\propautker} we have $g(b)=f(b)=e$, hence
  $f=g$ by Lemma \ref{lemma-f2e}.

  If the kernel of $f$ intersects $S$ trivially then by Lemma
  \ref{lemma-conj} there exists an automorphism $h:L\to L$
  such that $hf$ induces an automorphism of $S$.
  Since $S$ has no outer automorphisms, we may assume
  that $hf$ is the identity on $S$.
  Since $hf$ and $hg$ are identities on $S$ our claim
  follows from Lemma \ref{lemma-unique}.
\end{proof}

The following theorem summarizes the results of the paper.

\begin{theorem}
\mylabel{theorem-main}
  If a finite group $S$ has no outer automorphisms and
  satisfies Properties P1--P8 of Section $4$ then
  for any cardinal number $\kappa$ there exists a closed
  inclusion $S\to L$ such that:
  \begin{itemize}
  \item[(1)]
    The cardinality of $L$ is not less than $\kappa$.
  \item[(2)]
    There exists an abelian subgroup $E\subseteq L$ such that
    $L$ is generated as a group by $E$
    and the image of $S$ in $L$.
  \item[(3)]
    There exists an epimorphism $\pi:L\to E/Z$
    where $Z$ is an infinite cyclic subgroup of $E$.
    The composition $S\to L\to E/Z$ is trivial.
  \end{itemize}
\end{theorem}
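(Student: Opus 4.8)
The plan is to assemble the components that Sections 3--6 have already put in place; essentially nothing new needs to be proved, only the ingredients of the construction have to be chosen correctly and then properties (1)--(3) read off from the amalgam structure $L=E*_ZK$.

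First I would fix the cardinal. Given an arbitrary $\kappa$, I would pass to an infinite cardinal $\kappa'\geq\kappa$ with $\kappa'\geq 2^{\aleph_0}$ (for instance the cardinal sum $\kappa'=\kappa+2^{\aleph_0}$); such a $\kappa'$ is never strictly between $\aleph_0$ and the continuum, so Remark \ref{remark-E-S} supplies an $E$-ring $E$ of cardinality $\kappa'$ which is $p$-divisible for every prime dividing $|S|$ and enjoys all of properties (1)--(5) of Theorem \ref{thm-ering}. With this $E$ I would then run the construction of Section \ref{section-construction} verbatim, obtaining $K=M*_NS$, $L=E*_ZK$, and the homomorphism $\eta\colon S\to L$; since $S\subseteq N\subseteq M\subseteq K\subseteq L$ as successive subgroups of the amalgams, $\eta$ is genuinely an inclusion.

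Next comes the only substantial point, closedness of $\eta$, and here I would simply cite the two propositions. By definition $\eta$ is closed exactly when the restriction map $\hom(L,L)\to\hom(S,L)$ is a bijection. Its surjectivity is precisely Proposition \ref{proposition-exist}: every $f\colon S\to L$ extends to some $f'\colon L\to L$ with $f'\eta=f$. Its injectivity is Proposition \ref{proposition-unique}: two endomorphisms of $L$ agreeing on $S$ coincide. (Recall Proposition \ref{proposition-unique} in turn uses Lemma \ref{lemma-unique} together with the standing hypothesis that $S$ has no outer automorphisms, both of which are available.) Hence $\eta$ is closed.

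It then remains to verify (1)--(3). For (1): $E$ is a subgroup of $L$ of cardinality $\kappa'\geq\kappa$, so $|L|\geq\kappa$. For (2): the group $E$, being the additive group of an $E$-ring, is abelian, and the first Remark of Section \ref{section-properties} records that $L$ is generated by $E$ together with $K$, hence — since $K$ is generated by $S\cup\{c\}$ and $c=(cb)b^{-1}$ lies in the subgroup generated by $E$ (where $cb$ is identified with the ring identity) and $S$ — by $E$ and the image of $S$. For (3): I would take $\pi\colon E*_ZK\to E/Z$ to be the homomorphism amalgamating the quotient $E\to E/Z$ with the trivial map $K\to\{e\}$, the same $\pi$ already used in the proof of Lemma \ref{lemma-unique}; it is an epimorphism because $E\to E/Z$ is, the subgroup $Z=\langle 1\rangle\subseteq E$ is infinite cyclic since $E$ is torsion free by Theorem \ref{thm-ering}(2), and the composite $S\hookrightarrow K\xrightarrow{\pi}E/Z$ is trivial because $\pi$ kills all of $K$. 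I expect no real obstacle at this assembly stage: the only spots needing a moment's care are the choice of $\kappa'$ (to dodge the gap condition of Theorem \ref{thm-ering}) and the observation that $c$ lies in the subgroup generated by $E$ and $S$; the genuinely hard content, that $\eta$ is closed, is entirely absorbed into Propositions \ref{proposition-exist} and \ref{proposition-unique}.
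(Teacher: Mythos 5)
Your proposal is correct and follows the paper's own proof: closedness of $\eta$ is exactly the conjunction of Propositions \ref{proposition-exist} (surjectivity) and \ref{proposition-unique} (injectivity), and items (1)--(3) are read off from the construction of Section \ref{section-construction}, with your explicit choice of a cardinal $\kappa'\geq\kappa$ avoiding the gap condition of Theorem \ref{thm-ering} and your explicit $\pi$ being the same map used in Lemma \ref{lemma-unique}. (One cosmetic slip: the containments should be $N\subseteq S\subseteq K\subseteq L$, not $S\subseteq N\subseteq M\subseteq K$, since $N=N_S(A)$ is a subgroup of $S$; the conclusion that $\eta$ is an inclusion stands because $S$ is a factor of $K=M*_NS$ and $K$ is a factor of $L=E*_ZK$.)
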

\begin{proof}
  Propositions
  \ref{proposition-exist} and \ref{proposition-unique}
  imply that the inclusion $\eta:S\to L$ described in Section
  \ref{section-construction} is closed.
  Items (1)--(3) follow immediately from the construction,
  presented in Section \ref{section-construction}.
\end{proof}

\section{Example: closed embeddings of the Mathieu group.}
\label{section-examples}

In this section we prove, as a consequence of Theorem
\ref{theorem-main}, the existence of closed embeddings of the
Mathieu group $M_{11}$ into arbitrarily large groups.

\begin{example}
\label{example-mathieu}
  Let $S=M_{11}$ be the Mathieu group.
  For any cardinal number $\kappa$ there exists a closed embedding
  $M_{11}\to L$
  such that the cardinality of $L$ is at least $\kappa$.
  The group $L$ is generated by $M_{11}$ and an abelian subgroup
  $E\subseteq L$.
  The group $L$ has an abelian quotient isomorphic to $E/Z$
  where $Z$ is an infinite cyclic subgroup of $E$,
  in particular $L$ is far from being simple.
\end{example}
\begin{proof}
  We use \cite[p. 262]{gorenstein}. Let $a$ be an
  element of order $11$ in $S=M_{11}$. Then $N \mathrel{\mathop:}=
  N_S(\langle a\rangle)$ is a group of order $11 \cdot 5$ and
  $C_S(a) = \langle a\rangle$. Let $b \in S$ be an involution. Then
  Properties P1--P7 are immediate. Further if we choose $b$
  so that $b$ does not normalize any $5$-Sylow subgroup of $N$ (the
  existence of such a $b$ follows from an easy counting argument),
  then property {\propbnb} holds (since $b$ normalizes $N\cap
  \presup{b}N$).
\end{proof}

\end{document}